\newcommand{\mS}{\mathcal{S}}
\newcommand{\mT}{\mathcal{T}}
\newcommand{\M}{\mathbb{M}}
\newcommand{\ot}{\otimes}
\newcommand{\ra}{\rightarrow}
\newcommand{\C}{\mathbb{C}}
\newcommand{\N}{\mathbb{N}}
\newcommand{\mC}{\mathcal{C}}
\newcommand{\mO}{\mathcal{O}}
\newcommand{\ds}{\displaystyle}
\newcommand{\ltp}{\displaystyle\otimes_{\lambda}}
\newcommand{\vep}{\varepsilon}
\theoremstyle{thmit}
\newtheorem{theorem}{Theorem}[section]
\newtheorem{definition}[theorem]{Definition}
\newtheorem{proposition}[theorem]{Proposition}
\newtheorem{lemma}[theorem]{Lemma}
\newtheorem{corollary}[theorem]{Corollary}
\theoremstyle{thmrm}
\newtheorem{remarks}[theorem]{Remark}
\title[Matrix ordering and algebraic aspects of $\lambda$-theory]{\textbf{Polynomials in Operator space theory: Matrix ordering and algebraic aspects}}
\author[P. Luthra]{Preeti Luthra}
\address{Department of Mathematics\\ University of Delhi\\ Delhi-110007, INDIA}
\email{maths.preeti@gmail.com}
\author[A. Kumar]{Ajay Kumar$^*$}
\address{Department of Mathematics\\ University of Delhi\\ Delhi-110007, INDIA}
\email{akumar@maths.du.ac.in}
\author[V. Rajpal]{Vandana Rajpal}
\address{Department of Mathematics\\Shivaji College\\
University of Delhi\\
Delhi\\
India.}
\email{vandanarajpal.math@gmail.com}
\keywords{Matrix regular operator spaces, operator systems, tensor products, ideals.\\ 
\noindent\textit{Mathematics Subject Classification (2010): Primary
  46L06, 46L07; Secondary 46L05, 47L25}}
\begin{document}

\begin{abstract} We extend the $\lambda$-theory of operator spaces given in \cite{defantpolynomials}, that generalizes  the notion of the projective, Haagerup and Schur tensor norm for operator spaces to matrix ordered spaces and Banach $*$-algebras. Given matrix regular operator spaces and operator systems, we introduce cones related to $\lambda$ for the algebraic tensor product that respect the matricial structure of matrix regular operator spaces and operator systems, respectively. The ideal structure of $\lambda$-tensor product of $C^*$-algebras has also been discussed.
\end{abstract}

 \maketitle
 \footnotetext[1]{Corresponding author}
 
\section{Introduction}

$C^*$-algebras are rich objects as they come along with matrix norms that are not only uniquely related to algebraic structure but are also known to have matricial cone structures being closely related to those norm. Although, operator spaces and their tensor products are primarily defined in terms of appropriate matrix norms, over the years it has been observed that some operator space tensor products of $C^*$-algebras still possess few algebraic properties that can be characterized in terms of the individual algebras (\cite{allenideal, kumarideal}). Regarding ordering, although operator spaces may possess some order structure unrelated to the matrix norms, it was Schreiner \cite{schreiner} who defined matrix regular operator spaces to be the spaces where there is a relationship between norm and order. In matrix regular operator spaces, there are enough positive elements so that each element can be written as a linear combination of positive elements. Recently introduced tensor product theory for (unital) operator systems category (\cite{KPTT1}) shows that this matrix order-matrix norm relation is successfully carried over.

 Defant and Wiesner in \cite{defantpolynomials} (see also \cite{wiesnerpolynomials}) have given a $\lambda$-theory which generalizes the definitions of the projective, Haagerup and Schur tensor norm for operator spaces. It is thus natural to ask for appropriate matrix ordering and algebraic structure that is compatible with this generalized $\lambda$-theory. In \cite{Itoh2000} and \cite{rajpalschur}, the projective  and Schur operator space tensor product of matrix ordered operator spaces are shown to be matrix ordered respectively. Further, Han in \cite{hanpredual} successfully introduced cones at each matrix level of the tensor product of operator spaces that are closely related to projective and injective operator space tensor norms thereby, constructing two extremal tensor products of matrix regular operator space.

\Cref{s:1} discusses the prerequisites. Next, we introduce conditions (O1)-(O3) in \Cref{s:2} that enables generalization of Han's (\cite{hanpredual}) operator space tensor product matrix regularity results to $\lambda$-theory of operator spaces. In \Cref{s:3}, we show that the cones defined in \Cref{s:2} also preserve the operator system structure. Finally in \Cref{s:4}, we show that the techniques to study ideal structure of operator space tensor product of $C^*$-algebras can be extended to $\lambda$-theory.

\section{Preliminaries}\label{s:1}
\subsection{The $\lambda$-theory \cite{defantpolynomials}\cite{wiesnerpolynomials}} 

Let $V_1,V_2,\ldots,V_m;W$ be operator spaces and let $\phi$ be an $m$-linear mapping on $V_1 \times V_2 \times \cdots \times V_m$ into $W$. Given a sequence of matrix products $\lambda=(\lambda_k),$ for each $k$, $\lambda_k$ is an $m$-linear mapping: $$\lambda_k: M_k\times \cdots \times M_k \ra M_{\tau(k)},$$ where $\tau(k) \in \N$ is a natural number only depending on $k$, tensorizing $\lambda_k$ with $\phi$ leads to the $m$-linear mapping 
\begin{align*} \phi_{\lambda_k}:= \lambda_k \ot \phi  : M_k(V_1) \ot \cdots \ot M_k(V_m) \ra M_{\tau(k)}(W),\notag \\ (\alpha_1 \ot v_1, \cdots ,  \alpha_m \ot v_m) \mapsto \lambda_k(\alpha_1,\cdots, \alpha_m) \ot \phi(v_1,v_2,\cdots, v_m). \end{align*} Further $$\|\phi\|_{cb,\lambda}:=\sup_{k \in \N}\{\|\phi_{\lambda_k}(x_1,\ldots,x_m)\|_{M_{\tau(k)}(W)} \; : \;\|x_i\|_{M_k(V_i)} \leq 1\}$$
and $$CB_{\lambda}(V_1,\ldots,V_m;W):=\{\phi \in L(V_1,\ldots,V_m;W)\;:\;\|\phi\|_{cb,\lambda} < \infty\}.$$

Since $m$-fold tensor product on $V_1 \times V_2 \times \cdots \times V_m$ is an $m$-linear map onto $\ot_{i=1}^m V_i$, the natural map obtained as above by tensorizing with $\lambda_k$ is represented by $\otimes_{\lambda_k}$:
\begin{align*} \ot_{\lambda_k}: M_k(V_1) \ot \cdots \ot M_k(V_m) \ra M_{\tau(k)}(\ot_{i=1}^m V_i),\\(\alpha_1 \ot v_1, \cdots, \alpha_m \ot v_m) \mapsto \lambda_k(\alpha_1,\cdots, \alpha_m) \ot v_1 \ot v_2 \ot \cdots \ot v_m.\end{align*}

In \cite{defantpolynomials, wiesnerpolynomials}, a tensor norm $\lambda$ was defined as:
\begin{equation}\label{lnorm}
\|u\|_{\lambda,k}=\inf\{\|\alpha\|\|v_1\|\|v_2\|\cdots\|v_m\|\|\beta\|\}
\end{equation}
for any element $u \in M_k(\ds\ot_{i=1}^mV_i)$, where  the infimum is taken over arbitrary decompositions $u= \alpha \otimes_{\lambda_j} (v_1,v_2,\cdots, v_m) \beta$, $\alpha\in M_{k, \tau(j)}$, $\beta\in M_{\tau(j), k}$, $v_t\in M_j(V_t)$.

Keeping the notations from \cite{wiesnerpolynomials, defantpolynomials} unchanged, e.g. $\vep_{i,j}:=\vep^{[k,l]}_{i,j}\in M_{k,l}$ denotes the matrix which is 1 in the $(i,j)$-th coordinate and zero elsewhere, $\vep^{[k]}_{i,j}:=\vep^{[k,k]}_{i,j},\; \vep_i:=\vep_{i,i}, \;\vep^{[k]}_i:=\vep^{[k,k]}_{i,i}$ and $\vep^{[k,l]}_{i,j}=0$ if $(i,j) \notin \{1,\ldots,k\} \times \{1,\ldots,l\}$, we state the three technical conditions (E1)-(E3) that were isolated on the family $\lambda=(\lambda_n)_{n \in \N}$ to assure that the $\|\cdot\|_{\lambda,k}$, generates an operator space structure on $V_1 \ot V_2 \ot \cdots \ot V_m$ \cite[Proposition 4.1]{defantpolynomials}:

\begin{itemize}
\item[(E1)] For all $k \in \mathbb{N}$ there exist $p \in \N$ and matrices $S \in M_{k,\tau (p)}$ , $T \in M_{\tau (p),k}$, $a_1 , \cdots , a_k \in M_p$ such that for all $ j_1 , \cdots, j_m
 \in \{1, \cdots, k\}$:
$$
S \lambda_p(a_{j_1}, \cdots,a_{j_m}) T= \left\{
        \begin{array}{ll}
         \vep_j^{[k]} & \qquad \text{if}\;\; j_1=j_2=\cdots=j_m=j, \\
          0 & \qquad \text{otherwise.}
          \end{array}
    \right.
$$ 
\item[(E2)] For all $ r, s \in  \mathbb{N}$ there exist matrices  $P \in M_{\tau (r)+\tau (s), \tau(r+s)}$, with $\|P\|\leq 1$ such that for all $ (i_k , j_k ) \in 
 \{1, \cdots , r\}^2 \cup \{r + 1, \cdots , r + s\}^2$ with $1\leq k\leq m$:
 
\begin{align}
\begin{split}
& P \lambda_{r+s}( \vep_{i_1, j_1}^{[r+s]} , \ldots, \vep_{i_m ,j_m }^{[r+s]})P^* \\
&= \mathrm{diag}\Big( \lambda_r (\vep_{i_1 ,j_1}^{[r]} , \cdots , \vep_{i_m,j_m}^{[r]}), \lambda_{s}(\vep_{i_1-r ,j_1-r}^{[s]} , \cdots , \vep_{i_m-r,j_m-r}^{[s]})\Big).\notag
\end{split}
\end{align}
\item[(E3)] $\lambda_1(1,1,\cdots, 1)=1$ and $\ds\sup_{k\in \N}\|\lambda_k\| <\infty$.
\end{itemize}

\vspace{3mm}

If in addition $\lambda$ satisfies:

(N1)$ \qquad \tau(1) =1\qquad \qquad \text{and} \qquad \qquad$(N2)$ \quad \|\lambda_j\|=1$ for all $j \in \N,$

then $\ltp \C =\C$ completely isometric \cite[Proposition 4.13]{wiesnerpolynomials}.

\vspace{3mm}

For $j \in \{1,\ldots,m\}$, if $\lambda$ further satisfy conditions:

\begin{itemize}\label{w2}

\item[(W1)] For all $\gamma \in M_p$ there exists matrices $P \in M_{p,\tau(p)}$, $Q \in M_{\tau(p), p}$ with $\|P\|, \|Q\| \leq 1$ such that

\begin{align*}
\gamma = P \lambda_p(I_p,\cdots,I_p,\underbrace{\gamma}_\text{j-th position}, I_p,\cdots, I_p)Q
\end{align*}

\item[(W2)] For all $\alpha_1,\cdots,\alpha_m \in M_p$, $\beta_1,\cdots, \beta_m \in M_q$ there exist matrices $S \in M_{\tau(p)\tau(q),\tau(pq)}, T \in M_{\tau(pq),\tau(p)\tau(q)}$ with $\|S\|,\|T\| \leq 1$ such that

\begin{align*}
\lambda_p(\alpha_1,\cdots,\alpha_m) \ot \lambda_q(\beta_1,\cdots,\beta_m)=S\lambda_{pq}(\alpha_1\ot\beta_1,\cdots,\alpha_m \ot \beta_m) T
\end{align*}

\end{itemize}

then the mapping 

\begin{align*}
\Phi^{(j)} : (V_1 \ot \cdots \ot M_p(V_j) \ot \cdots \ot V_m , \|\cdot\|_{\lambda}) \ra M_p(\ltp V_i) \\ v_1 \ot \cdots\ot(\alpha \ot v_j) \ot \cdots v_m \mapsto \alpha \ot (v_1 \ot \cdots \ot v_m)
\end{align*}

is completely contractive \cite[Proposition 12.2]{wiesnerpolynomials}.

\vspace{3mm}

If $\lambda$ satisfies (N1)-(N2), (E1)-(E3) and (W1)-(W2) then $\ot^\lambda V_i$, the completion of  $\ot_\lambda V_i$ with respect to $\|\cdot\|_{\lambda}$ norm, is an operator space tensor product denoted by $\lambda$-operator space tensor product in the sense of \cite{bptensor}.

\vspace{3mm}

The Kronecker product, matrix product and mixed product fulfill all the above conditions.

\vspace{3mm}

We assume throughout that $\lambda$ satisfies all the prescribed conditions.

\vspace{3mm}
%
%
%
%
%

\subsection{Matrix regular operator space and operator systems}
An operator space $V$ is called a matrix ordered operator space if: 
\begin{enumerate}
\item $(V,\{M_n(V)^+\}_{n=1}^{\infty})$ is a matrix ordered vector space i.e. for each $n \in \N$, $M_n(V)$ is a $*$-ordered vector space with cone $M_n(V)^+$ and $A \in \M_{n,m}$ implies $A^*M_n(V)^+A \subseteq M_m(V)^+$.
\item the $*$-operation is an isometry on $M_n(V)$.
\item the cones $M_n(V)^+$ are closed.
\end{enumerate}

   A matrix ordered
operator space $V$ is called matrix regular \cite[Definition 3.1.9]{schreiner} if for each $n \in \N$ and
for all  $v\in M_n(V)_{sa}$, the following conditions hold : 

\begin{enumerate}

\item $u\in M_n(V)^+$ and $-u \leq v \leq u$ implies that $\|v\|_n \leq \|u\|_n $.

\item $\|v\|_n \leq 1$ implies that there exists $u\in M_n(V)^+$ such that $\|u\|_n \leq 1$ and $-u \leq v \leq u$.

\end{enumerate}

Next result from \cite{schreiner} giving a necessary and sufficient for a matrix ordered operator space $V$ to be matrix regular is quite useful:

\begin{theorem}{\cite[Theorem 3.4]{schreiner}} A matrix ordered operator space V is matrix regular if and only if the following condition holds: for all
$x \in  M_n(V)$,  $\|x\|_n < 1$ if and only if there exist $a, d \in M_n(V)^+$, $\|a\|_n <1$ and $\|d\|_n < 1$, such that $\begin{pmatrix}
a & x \\ x^* & d
\end{pmatrix} \in M_{2n}(V)^+$.
\end{theorem}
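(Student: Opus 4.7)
The plan is to exploit the classical block-matrix trick converting order intervals $-u \leq v \leq u$ (for self-adjoint $v$) into positivity of $\begin{pmatrix} u & v \\ v & u \end{pmatrix}$, combined with a symmetrization that lifts a general $x \in M_n(V)$ to the self-adjoint element $y := \begin{pmatrix} 0 & x \\ x^* & 0 \end{pmatrix} \in M_{2n}(V)_{sa}$. For the forward direction, first establish $\|y\|_{2n} \leq \|x\|_n$: writing $y = U \cdot \mathrm{diag}(x^*, x)$ for the unitary scalar flip $U = \begin{pmatrix} 0 & I_n \\ I_n & 0 \end{pmatrix}$, this bound follows from the contractivity of scalar-matrix multiplication, the isometry of $*$, and the operator space axiom $\|x^* \oplus x\|_{2n} = \|x\|_n$. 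Given $\|x\|_n < 1$, matrix regularity (condition (2), combined with a rescaling to upgrade $\leq$ to $<$) supplies $u = \begin{pmatrix} a & b \\ b^* & d \end{pmatrix} \in M_{2n}(V)^+$ with $\|u\|_{2n} < 1$ and $u \pm y \in M_{2n}(V)^+$; compression shows $\|a\|_n, \|d\|_n < 1$. Setting $D := \mathrm{diag}(I_n, -I_n)$, the element $u' := DuD \in M_{2n}(V)^+$ satisfies $u' \mp y = D(u \pm y)D \in M_{2n}(V)^+$, so
\[
\tfrac{1}{2}(u + u') + y = \begin{pmatrix} a & x \\ x^* & d \end{pmatrix} \in M_{2n}(V)^+,
\]
as desired.

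For the converse, assume the characterization and verify both defining conditions of matrix regularity. For (2), suppose $v \in M_n(V)_{sa}$ with $\|v\|_n < 1$; the ``only if'' direction supplies $a, d \in M_n(V)^+$ with $\|a\|_n, \|d\|_n < 1$ and $\begin{pmatrix} a & v \\ v & d \end{pmatrix} \in M_{2n}(V)^+$. Compressing by $\begin{pmatrix} I_n \\ \pm I_n \end{pmatrix}$ yields $a + d \pm 2v \in M_n(V)^+$, so $u := (a+d)/2 \in M_n(V)^+$ has $\|u\|_n < 1$ and satisfies $-u \leq v \leq u$; the boundary case $\|v\|_n = 1$ follows by applying the above to $v/(1+\varepsilon)$. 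For (1), given $u \geq 0$ and $-u \leq v \leq u$, the identity
\[
\begin{pmatrix} u & v \\ v & u \end{pmatrix} = \tfrac{1}{2} \begin{pmatrix} I_n \\ I_n \end{pmatrix}(u+v)\begin{pmatrix} I_n & I_n \end{pmatrix} + \tfrac{1}{2} \begin{pmatrix} I_n \\ -I_n \end{pmatrix}(u-v)\begin{pmatrix} I_n & -I_n \end{pmatrix}
\]
exhibits this matrix as positive. If $\|v\|_n > \|u\|_n$, pick $\lambda$ with $\|u\|_n < \lambda < \|v\|_n$; then $u/\lambda$ serves as both $a$ and $d$ with norms strictly less than $1$, so the ``if'' direction forces $\|v/\lambda\|_n < 1$, contradicting the choice of $\lambda$. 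Hence $\|v\|_n \leq \|u\|_n$.

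The main obstacle is the norm estimate $\|y\|_{2n} \leq \|x\|_n$ in the forward direction, which must be derived purely from the operator space axioms and the $*$-isometry of the ambient matrix ordered structure rather than from matrix regularity itself. A secondary delicate point is the symmetrization $u \mapsto (u + DuD)/2$, whose sole purpose is to annihilate the unwanted off-diagonal block $b$ of $u$ while preserving positivity when added to $y$; without conjugating by the sign-flipping diagonal $D$, the raw application of condition (2) yields only a dominating positive element, not one with the exact block form required by the characterization.
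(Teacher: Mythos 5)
The paper itself does not prove this statement: it is quoted directly from Schreiner as background, so there is no in-paper argument to compare against and your proposal must stand on its own. Your strategy --- symmetrizing $x$ to $y=\begin{pmatrix} 0 & x \\ x^* & 0\end{pmatrix}$, converting order intervals into block positivity, and killing the unwanted off-diagonal block of the dominating element by conjugating with $D=\mathrm{diag}(I_n,-I_n)$ --- is the standard (essentially Schreiner's) route, and the steps you actually carry out are correct.

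There are two gaps. The main one: the forward implication requires both halves of the inner biconditional, and you prove only one. You show that matrix regularity together with $\|x\|_n<1$ produces the positive block matrix, but you never show the reverse half, namely that $a,d\in M_n(V)^+$ with $\|a\|_n,\|d\|_n<1$ and $\begin{pmatrix} a & x\\ x^* & d\end{pmatrix}\in M_{2n}(V)^+$ force $\|x\|_n<1$. This is exactly where condition (1) of matrix regularity enters: conjugating by $D$ gives $-\mathrm{diag}(a,d)\le y\le \mathrm{diag}(a,d)$, hence $\|x\|_n\le\|y\|_{2n}\le\max\{\|a\|_n,\|d\|_n\}<1$. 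Without it the ``if and only if'' in the characterization is not established --- and your own converse argument for condition (1) silently invokes precisely this missing half. The secondary gap is the boundary case of condition (2) in the converse: applying your argument to $v/(1+\varepsilon)$ produces a dominating positive element of norm $<1+\varepsilon$, not $\le 1$, and with no compactness available you cannot pass to a limit. This strict-versus-non-strict discrepancy is exactly what Schreiner handles with a separate equivalence lemma (or by stating condition (2) with strict inequalities), so your one-line rescaling does not close it as written.
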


The positive cone of a matrix regular operator
space is always proper.

Adopting the methodology of \cite{hanpredual}, the norms on matrix regular operator spaces are not assumed to be complete.

For a matrix ordered operator space $V$ and its dual space $V^*,$ the positive cone on $M_n(V^*)$ for each $n \in \N$ is defined by $M_n(V^*)^+= CB(V,M_n) \cap CP(V,M_n)$.
The operator space dual $V^*$ with this positive cone is a matrix ordered operator space \cite[Corollary 3.2]{schreiner}.

An (abstract) operator system (\cite[Definition 2.2]{KPTT1})
is a triple $(V ,\{\mC_n\}_{n=1}^{\infty},e$), where $V$ is a complex $*$-vector space, $\{\mC_n\}_{n=1}^{\infty}$ is a matrix ordering on $V,$ and $e \in V_{sa}$ is an Archimedean matrix order unit, i.e. for all $v \in M_n(V)_{sa}$, \begin{enumerate}
\item  there exists a real number $r >0$ such that $re_n >v$ and
\item for each $n \in \N$ and $e_n= \begin{pmatrix}
e & & \\
  & \ddots & \\
  & & e
\end{pmatrix}$, $s e_n + v \in \mC_n$ for all $s >0$ implies $v \in \mC_n.$
\end{enumerate}

\section{$\lambda$-theory and Matrix regularity}\label{s:2}
In this section, we provide three additional conditions on $\lambda=(\lambda_n)_{n\in \N}$ to introduce an order structure to $\lambda$-theory that preserves matrix regularity. Further, using our conditions (O1)-(O3) defined below, we prove that the results of \cite{hanpredual} hold true in a more general setting introduced by \cite{defantpolynomials,wiesnerpolynomials}.

\vspace{3mm}

For a sequence $\lambda=(\lambda_n)_{n\in\N}$ of $m$-linear mappings $\lambda_k \in L(^mM_k;M_{\tau(k)})$ consider the following three properties:

\vspace{3mm}

(O1)\label{o1} For each $r \in \N$, $$\lambda_r(\vep^{[r]}_{i_1,j_1},\vep^{[r]}_{i_2,j_2},\cdots,\vep^{[r]}_{i_m,j_m}) = \lambda_r(\vep^{[r]}_{j_1,i_1},\vep^{[r]}_{j_2,i_2},\cdots,\vep^{[r]}_{j_m,i_m})^* \in M_{\tau(r)},$$ for all  $(i_k,j_k) \in \{1,\ldots,r\} \times \{1,\ldots,r\}$, and $k=1,2,\dots m$.

\vspace{3mm}

\vspace{3mm}

{(O2)}\label{o2} For $r \in \mathbb{N}$, the permutation matrix $P \in M_{2\tau(r),\tau(2r)}$ with
$\|P\| \leq 1$ obtained in (E2) and $(i_k,j_k)\in R \cup S$, where $R:=  \{1,\cdots,r\}\times \{r+1,r+2,\cdots, 2r\}$
and $S:= \{r+1,r+2,\cdots, 2r\} \times  \{1,\cdots,r\}$ 

\begin{align}
\begin{split}
& P\lambda_{2r} (\vep_{i_1,j_1}^{[2r]}, \cdots, \vep_{i_m,j_m}^{[2r]}) P^* \\
& = \mathrm{adiag} \big(\lambda_r(\vep_{i_1,j_1-r}^{[r]},\cdots,\vep_{i_1,j_1-r}^{[r]}),\lambda_r(\vep_{i_1-r,j_1}^{[r]}, \cdots,
 \vep_{i_m-r,j_m}^{[r]})\big); \notag
 \end{split}
 \end{align}
 adiag being an anti-diagonal matrix, where all the entries are zero except those on the diagonal going from the upper right corner to the lower left corner. 
 
 \vspace{3mm}

{(O3)}\label{o3} For each $r \in \N$, the map
 $$\displaystyle \overset{p_1\ldots p_m}{\ot_{\lambda_r}}= \displaystyle \overset{p_1\ldots p_m}{\ot} \ot \lambda_r  : M_r(M_{p_1}) \ot M_r(M_{p_2}) \ot \ldots \ot M_r(M_{p_m}) \ra M_{\tau(r)}(M_{p_1p_2\ldots p_m})$$
obtained by tensorizing $\lambda_r$ with the Kronecker product on matrix algebras $M_{p_1},\ldots, M_{p_m}$ $$\displaystyle \overset{p_1\ldots p_m}{\ot} : M_{p_1} \times M_{p_2} \times \ldots \times M_{p_m} \ra M_{p_1p_2\ldots p_m}$$ $$( \alpha_1, \alpha_2, \ldots,  \alpha_m) \mapsto \alpha_1 \ot \alpha_2 \ot \ldots \ot \alpha_m,$$ is positive for all $p_i \in \N$ $(i=1,2,\ldots,m)$. Thus,  $$ \displaystyle \overset{p_1\ldots p_m}{\ot_{\lambda_r}}(\alpha_1 \ot \beta_1, \ldots, \alpha_m \ot \beta_m)= \lambda_r(\beta_1 \ot \ldots \ot  \beta_m) \ot  \alpha_1 \ot \ldots \ot \alpha_m \in M_{\tau(r)p_1p_2\ldots p_m}^+, $$ whenever $\alpha_i \ot \beta_i \in (M_{p_i} \ot M_r)^+$, $p_i \in \N$ $(i=1,2, \ldots m).$

\vspace{3mm}

Recall from \cite[Proposition 4.1]{schreiner} (see also \cite[Proposition 4.2]{wiesnerpolynomials}), given a sequence $\lambda=(\lambda_n)$ of $m$-linear maps and operator spaces $V_1,V_2,\ldots,V_m$ any element $u \in M_k(V_1 \ot \ldots V_m)$ has a representation $u=\alpha \ot_{\lambda_r} (v_1, v_2, \ldots, v_m) \beta$ where $\alpha \in M_{n, \tau(r)}, \beta \in M_{\tau(r),n}, v_i \in M_{r}(V_i), r \in \N.$ 

Next, we analyze the above conditions in view of their applications to matrix ordered spaces:

 \begin{lemma}\label{O1-3} Let $\lambda=(\lambda_n)$ be sequence of $m$-linear maps and $V_1,\ldots V_m$ be matrix ordered operator spaces. For any  $\alpha \ot_{\lambda_r}(v^{(1)},v^{(2)},\ldots,v^{(m)})\beta \in M_n(\ot_{\lambda} V_i)$; $\alpha \in M_{n, \tau(r)}, \beta \in M_{\tau(r),n}, v^{(i)} \in M_{r}(V_i), i=1,\ldots,m, r\in \N,$ we have:
 \begin{enumerate}[(i)]
 \item If $\lambda$ satisfies \emph{(O1)}, then $*$-map defined as $$\big(\alpha \ot_{\lambda_r}(v^{(1)},v^{(2)},\ldots,v^{(m)})\beta \big)^*= \beta^* \ot_{\lambda_r}\big((v^{(1)})^*,(v^{(2)})^*,\ldots,(v^{(m)})^*\big) \alpha^*,$$ is a well defined involution.
 \item If $\lambda$ satisfies \emph{(O2)}, then for $u^{(i)},\tilde{u}^{(i)} \in M_{r}(V_i)$, $i=1,\ldots,m$
\begin{align}
\begin{split}
& \begin{pmatrix}
\ot_{\lambda_r}(u^{(1)},u^{(2)},\ldots,u^{(m)}) &  \ot_{\lambda_r}(v^{(1)},v^{(2)},\ldots,v^{(m)}) \\
 \ot_{\lambda_r}\big((v^{(1)})^*,(v^{(2)})^*,\ldots,(v^{(m)})^*\big) &  \ot_{\lambda_r}(\tilde{u}^{(1)},\tilde{u}^{(2)},\ldots,\tilde{u}^{(m)})  
\end{pmatrix} \\
& = P\ot_{\lambda_{2r}}
\Bigg(\begin{pmatrix}
u^{(1)}  & v^{(1)}\\
(v^{(1)})^*  &   \tilde{u}^{(1)}
\end{pmatrix} ,\ldots, 
\begin{pmatrix}
u^{(m)}  & v^{(m)}\\
(v^{(m)})^*  &   \tilde{u}^{(m)}
\end{pmatrix}  \Bigg)P^*  . \notag 
\end{split}
\end{align}
 \item For $\lambda$ and $\mu$, if $(\mu_p)_{\lambda_r}$ is a positive map $(p,r \in \N)$, then for  $v^{(i)} \in M_r(V_i)^+$ and completely positive maps $\phi^{(i)} : V_i \ra M_{p_i}$, $i=1,\ldots,m$, we have
\begin{align*}\big(\ot_{\mu_p} (\phi^{(1)},\ldots,\phi^{(m)})\big)_{n}(\alpha \ot_{\lambda_r}(v^{(1)},v^{(2)},\ldots,v^{(m)})\alpha^*) \in M_{np_1\ldots p_m}^+.
\end{align*}
In particular, if $\lambda$ satisfies \emph{(O3)},
\begin{align*}\big( (\phi^{(1)} \ot \ldots \ot \phi^{(m)})\big)_{n}(\alpha \ot_{\lambda_r}(v^{(1)},v^{(2)},\ldots,v^{(m)})\alpha^*) \in M_{np_1\ldots p_m}^+.
    \end{align*}
 \end{enumerate}
 \end{lemma}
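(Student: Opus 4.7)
For part (i), I plan to verify that the prescribed formula agrees with the intrinsic involution on $M_n(V_1 \ot \cdots \ot V_m)$ coming from the $*$-structures of $M_n$ and of each $V_t$. Expanding each $v^{(t)} = \sum_{i_t,j_t} \vep^{[r]}_{i_t,j_t} \ot v_t^{(i_t,j_t)}$, the element $\alpha \ot_{\lambda_r}(v^{(1)},\ldots,v^{(m)})\beta$ is realized as $\sum \alpha\lambda_r(\vep_{i_1,j_1},\ldots,\vep_{i_m,j_m})\beta \ot v_1^{(i_1,j_1)} \ot \cdots \ot v_m^{(i_m,j_m)}$. Applying the intrinsic involution and then using (O1) to rewrite $\lambda_r(\vep_{i_1,j_1},\ldots,\vep_{i_m,j_m})^*$ as $\lambda_r(\vep_{j_1,i_1},\ldots,\vep_{j_m,i_m})$, the result matches $\beta^* \ot_{\lambda_r}((v^{(1)})^*,\ldots,(v^{(m)})^*)\alpha^*$. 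Since the intrinsic involution is independent of the chosen representation of $u$, so is the stated formula.

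For part (ii), the plan is to expand each $2 \times 2$ block matrix $\begin{pmatrix} u^{(t)} & v^{(t)} \\ (v^{(t)})^* & \tilde u^{(t)} \end{pmatrix} \in M_{2r}(V_t)$ in matrix units $\vep^{[2r]}_{a,b}$ and partition the index sum into four regions: $\{1,\ldots,r\}^2$, $\{r+1,\ldots,2r\}^2$, $R$, and $S$. The first two regions contribute to the block-diagonal entries of the LHS via (E2); the last two regions contribute to the antidiagonal entries via (O2). Applying $P \ot_{\lambda_{2r}}(\cdot) P^*$ termwise and collecting by region then reassembles exactly the LHS. The only bookkeeping subtlety is matching the index shifts (e.g.\ $a \mapsto a - r$ for indices in the second block), which is built directly into the statements of (E2) and (O2).

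For part (iii), the idea is to pass the completely positive maps inside the tensor product. Complete positivity yields $(\mathrm{id}_{M_r} \ot \phi^{(i)})(v^{(i)}) \in M_r(M_{p_i})^+$ for each $i$. Assuming $(\mu_p)_{\lambda_r}$ is positive, applying it to this tuple gives a positive element of $M_{\tau(r) p_1 \cdots p_m}$; expanded in matrix units it equals $\sum \lambda_r(\vep_{a_1,b_1},\ldots,\vep_{a_m,b_m}) \ot \phi^{(1)}(v_1^{a_1,b_1}) \ot \cdots \ot \phi^{(m)}(v_m^{a_m,b_m})$. Conjugating by $\alpha \ot I_{p_1 \cdots p_m}$ (a positivity-preserving operation from $M_{\tau(r) p_1 \cdots p_m}$ into $M_{n p_1 \cdots p_m}$) yields precisely $\bigl(\ot_{\mu_p}(\phi^{(1)},\ldots,\phi^{(m)})\bigr)_n\bigl(\alpha \ot_{\lambda_r}(v^{(1)},\ldots,v^{(m)})\alpha^*\bigr)$, so this element lies in $M_{n p_1 \cdots p_m}^+$. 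The ``in particular'' clause is then immediate, since (O3) is exactly the statement that the Kronecker product map $\overset{p_1\ldots p_m}{\otimes}_{\lambda_r}$ is positive.

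The step I expect to require the most care is the index bookkeeping in (ii) --- a clean enumeration of the four index regions, and of which block of the $2 \times 2$ output each region contributes to, is essential in order to see that the diagonal part of (E2) and the antidiagonal part of (O2) combine to give all the off-diagonal as well as diagonal entries on the LHS. Parts (i) and (iii), by contrast, reduce to direct verifications once the expansions in matrix units are written out.
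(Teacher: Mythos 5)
Your proposal is correct and follows essentially the same route as the paper's proof: matrix-unit expansions throughout, the four-region index partition combining (E2) for the diagonal blocks with (O2) for the antidiagonal blocks in part (ii), and the factorization through the positive map $(\mu_p)_{\lambda_r}$ applied to $\phi^{(i)}_r(v^{(i)}) \in M_r(M_{p_i})^+$ followed by conjugation with $\alpha$ in part (iii). Your treatment of (i) via agreement with the representation-independent intrinsic involution is, if anything, more explicit than the paper's one-line verification.
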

 \begin{proof} To obtain (i) one can easily verify that the $*$-operation is conjugate linear and involutive.
 \begin{enumerate}
 \item[(ii)] We have
  
\begin{align}
 \begin{split} & \begin{pmatrix}
\ot_{\lambda_r}(u^{(1)},u^{(2)},\ldots,u^{(m)}) &  \ot_{\lambda_r}(v^{(1)},v^{(2)},\ldots,v^{(m)}) \\
 \ot_{\lambda_r}((v^{(1)})^*,(v^{(2)})^*,\ldots,(v^{(m)})^*) &  \ot_{\lambda_r}(\tilde{u}^{(1)},\tilde{u}^{(2)},\ldots,\tilde{u}^{(m)})  
\end{pmatrix}\\ &  = \mathrm{diag}\big(\ot_{\lambda_r}(u^{(1)},u^{(2)},\ldots,u^{(m)}) , \ot_{\lambda_r}(\tilde{u}^{(1)},\tilde{u}^{(2)},\ldots,\tilde{u}^{(m)}\big) + \\& \qquad \mathrm{adiag}\big(\ot_{\lambda_r}(v^{(1)},v^{(2)},\ldots,v^{(m)}) , \ot_{\lambda_r}((v^{(1)})^*,(v^{(2)})^*,\ldots,(v^{(m)})^*)\big). \notag
\end{split}
\end{align}

Setting $R_1:= \{1,2,\dots,r\}^2$, $R_2:=\{1,2,\dots,r\} \times \{r+1,r+2,\ldots,2r\}$,\\ $R_3:=\{r+1,r+2,\ldots,2r\} \times \{1,2,\ldots,r\}$ and $R_4:=\{r+1,r+2,\ldots,2r\}^2 $,
let $u^{(t)}:= \ds \sum_{(k_t,l_t)\in R_1} \vep^{[r]}_{k_t,l_t} \otimes u_{k_t,l_t}^{(t)}$, $v^{(t)}:= \ds\sum_{(k_t,l_t)\in R_2} \vep^{[r]}_{k_t,l_t-r} \otimes v_{k_t,l_t}^{(t)}$,\\ $(v^{(t)})^* =  \ds\sum_{(k_t,l_t)\in R_3} \vep^{[r]}_{k_t-r,l_t} \otimes (v_{k_t,l_t}^{(t)})^*$ and  $\tilde{u}^{(t)}:= \ds\sum_{(k_t,l_t)\in R_4} \vep^{[r]}_{k_t - r,l_t} \otimes \tilde{u}_{k_t,l_t}^{(t)}$.\\Define $x^{(t)}: = \mathrm{diag} (u^{(t)}, \tilde{u}^{(t)})$ and $y^{(t)} := \mathrm{adiag} (v^{(t)}, (v^{(t)})^*)$, so that 
 $$ x^{(t)}_{k,l}= \left\{
        \begin{array}{ll}
         u^{(t)}_{k,l} & \qquad \text{if}\;\; (k,l)  \in R_1, \\
           \tilde{u}^{(t)}_{k,l} & \qquad \text{if}\;\;  (k,l) \in R_4
          \end{array}
    \right\} \:\; \text{and} \;\; y^{(t)}_{k,l}= \left\{
        \begin{array}{ll}
         v^{(t)}_{k,l} & \qquad \text{if}\;\; (k,l)  \in R_2, \\
           (v^{(t)})^*_{k,l} & \qquad \text{if}\;\;  (k,l)  \in R_3
          \end{array} \right\} .
$$ 
Then,

\begin{align}
\begin{split}
& \mathrm{adiag}\big(\ot_{\lambda_r}(v^{(1)},v^{(2)},\ldots,v^{(m)}) , 0) \\
& = \mathrm{adiag}\big( \sum_{(k_m,l_m) \in R_2} \cdots \sum_{(k_1,l_1) \in R_2} \lambda_r(\vep^{[r]}_{k_1,l_1-r},\cdots,\vep^{[r]}_{k_m,l_m-r}) \ot v_{k_1,l_1}^{(1)} \ot \cdots   \ot v_{k_m,l_m}^{(m)}, 0\big)\\
& =  \sum_{(k_m,l_m) \in R_2} \cdots \sum_{(k_1,l_1) \in R_2} \mathrm{adiag}\big(\lambda_r(\vep^{[r]}_{k_1,l_1-r},\cdots,\vep^{[r]}_{k_m,l_m-r}),0\big) \ot y_{k_1,l_1}^{(1)} \ot \cdots   \ot y_{k_m,l_m}^{(m)} \\
& \overset{\text{\textbf{(O2)}}}{=}  \sum_{(k_m,l_m) \in R_2} \cdots \sum_{(k_1,l_1) \in R_2}  P\lambda_{2r} (\vep_{k_1,l_1}^{[2r]}, \cdots, \vep_{k_m,l_m}^{[2r]}) P^* \ot y_{k_1,l_1}^{(1)} \ot \cdots   \ot y_{k_m,l_m}^{(m)} \notag
\end{split}
\end{align}

Also,

\begin{align}
\begin{split}
& \mathrm{adiag}\big(0,\ot_{\lambda_r}((v^{(1)})^*,(v^{(2)})^*,\ldots,(v^{(m)})^*)) \\
& = \mathrm{adiag}\big( 0,\sum_{(k_m,l_m) \in R_3} \cdots \sum_{(k_1,l_1) \in R_3} \lambda_r(\vep^{[r]}_{k_1-r,l_1},\cdots,\vep^{[r]}_{k_m-r,l_m}) \ot (v_{k_1,l_1}^{(1)})^* \ot \cdots   \ot (v_{k_m,l_m}^{(m)})^*\big)\\
& =  \sum_{(k_m,l_m) \in R_3} \cdots \sum_{(k_1,l_1) \in R_3} \mathrm{adiag}\big(0, \lambda_r(\vep^{[r]}_{k_1-r,l_1},\cdots,\vep^{[r]}_{k_m-r,l_m}\big) \ot y_{k_1,l_1}^{(1)} \ot \cdots   \ot y_{k_m,l_m}^{(m)} \\
& \overset{\text{\textbf{(O2)}}}{=} \sum_{(k_m,l_m) \in R_3} \cdots \sum_{(k_1,l_1) \in R_3} P\lambda_{2r} (\vep_{k_1,l_1}^{[2r]}, \cdots, \vep_{k_m,l_m}^{[2r]}) P^* \ot y_{k_1,l_1}^{(1)} \ot \cdots   \ot y_{k_m,l_m}^{(m)} \notag
\end{split}
\end{align}

Thus,

\begin{align}
\begin{split}
 & \mathrm{adiag}\big(\ot_{\lambda_r}(v^{(1)},v^{(2)},\ldots,v^{(m)}) , \ot_{\lambda_r}((v^{(1)})^*,(v^{(2)})^*,\ldots,(v^{(m)})^*)\big)\\
 & =\sum_{(k_m,l_m) \in R_2 \cup R_3} \cdots \sum_{(k_1,l_1) \in  R_2 \cup R_3} P\lambda_{2r} (\vep_{k_1,l_1}^{[2r]}, \cdots, \vep_{k_m,l_m}^{[2r]}) P^*  \ot y_{k_1,l_1}^{(1)} \ot \cdots   \ot y_{k_m,l_m}^{(m)} \notag
 \end{split}
\end{align}

Similarly, using (E2),

\begin{align}
\begin{split}
& \mathrm{diag}\big(\ot_{\lambda_r}(u^{(1)},u^{(2)},\ldots,u^{(m)}) , \ot_{\lambda_r}(\tilde{u}^{(1)},\tilde{u}^{(2)},\ldots,\tilde{u}^{(m)}\big)\\ &=\sum_{(k_m,l_m) \in R_1 \cup R_4} \cdots \sum_{(k_1,l_1) \in  R_1 \cup R_4} P\lambda_{2r} (\vep_{k_1,l_1}^{[2r]}, \cdots, \vep_{k_m,l_m}^{[2r]}) P^*  \ot x_{k_1,l_1}^{(1)} \ot \cdots   \ot x_{k_m,l_m}^{(m)}\notag
\end{split}
\end{align} Therefore,
\begin{align}
 \begin{split} 
 & \begin{pmatrix}
\ot_{\lambda_r}(u^{(1)},u^{(2)},\ldots,u^{(m)}) &  \ot_{\lambda_r}(v^{(1)},v^{(2)},\ldots,v^{(m)}) \\
 \ot_{\lambda_r}((v^{(1)})^*,(v^{(2)})^*,\ldots,(v^{(m)})^*) &  \ot_{\lambda_r}(\tilde{u}^{(1)},\tilde{u}^{(2)},\ldots,\tilde{u}^{(m})  
\end{pmatrix}\\ 
& = P \Big(\sum_{(k_m,l_m) \in \cup_{i=1}^{4} R_i} \cdots \sum_{(k_1,l_1) \in  \cup_{i=1}^{4} R_i} \ot_{\lambda_{2r}}\big( \vep_{k_1,l_1}^{[2r]} \ot x_{k_1,l_1}^{(1)} + \vep_{k_1,l_1}^{[2r]} \ot y_{k_1,l_1}^{(1)},\cdots \\
& \qquad \qquad \qquad\qquad\qquad\qquad\qquad\qquad\qquad \cdots,\vep_{k_m,l_m}^{[2r]} \ot x_{k_m,l_m}^{(m)} + \vep_{k_m,l_m}^{[2r]} \ot y_{k_m,l_m}^{(m)}\big)\Big)P^*\\
 & = P\ot_{\lambda_{2r}}
\Bigg(\begin{pmatrix}
u^{(1)}  & v^{(1)}\\
(v^{(1)})^*  &   \tilde{u}^{(1)}
\end{pmatrix} ,\ldots, 
\begin{pmatrix}
u^{(m)}  & v^{(m)}\\
(v^{(m)})^*  &   \tilde{u}^{(m)}
\end{pmatrix}  \Bigg)P^*.  \notag
\end{split}
\end{align}

\item[(iii)] Note that, if $v^{(t)}:= \ds\sum_{(k_t,l_t)\in R} \vep^{[r]}_{k_t,l_t} \otimes v_{k_t,l_t}^{(t)};$ $R:=\{1,\ldots,r\}^2$, $t=1,2,\ldots,m$ then,
 
\begin{align}
 \begin{split}
 & (\ot_{\mu_p} ( \phi^{(1)},\ldots,\phi^{(m)})_{\tau_{\mu}(p)}( \ot_{\lambda_r}(v^{(1)},v^{(2)},\cdots,v^{(m)}))\\
 & = \sum_{k_m,l_m}\ldots \sum_{k_1,l_1}(\ot_{\mu_p} ( \phi^{(1)},\ldots,\phi^{(m)})_{\tau_{\mu}(p)}\big(\lambda_r(\vep_{k_1,l_1},\ldots,\vep_{k_m,l_m}) \ot v^{(1)}_{k_1,l_1} \ot \ldots \ot v^{(m)}_{k_m,l_m}\big)\\
 & = \sum_{k_m,l_m}\ldots \sum_{k_1,l_1}\lambda_r(\vep_{k_1,l_1},\ldots,\vep_{k_m,l_m})\ot \big(\ot_{\mu_p} ( \phi^{(1)},\ldots,\phi^{(m)})(v^{(1)}_{k_1,l_1} \ot \ldots \ot v^{(m)}_{k_m,l_m})\big)\\
 &= \sum_{k_m,l_m}\ldots \sum_{k_1,l_1}\lambda_r(\vep_{k_1,l_1},\ldots,\vep_{k_m,l_m}) \ot \mu_p\big(\phi^{(1)}(v^{(1)}_{k_1,l_1}) \ot \ldots \ot \phi^{(m)}( v^{(m)}_{k_m,l_m})\big)\\
 & = \sum_{k_m,l_m}\ldots \sum_{k_1,l_1} (\mu_p)_{\lambda_r} \big(\vep_{k_1,l_1} \ot \phi^{(1)}(v^{(1)}_{k_1,l_1}), \ldots, \vep_{k_m,l_m} \ot \phi^{(m)}(v^{(m)}_{k_m,l_m})\big)\\
 & = \sum_{k_m,l_m}\ldots \sum_{k_1,l_1} (\mu_p)_{\lambda_r} \big(\phi^{(1)}_r(\vep_{k_1,l_1} \ot v^{(1)}_{k_1,l_1}), \ldots, \phi^{(m)}_r(\vep_{k_m,l_m} \ot v^{(m)}_{k_m,l_m}))\big)\\
 & = (\mu_p)_{\lambda_r} \big( \phi^{(1)}_r(v^{(1)}),\ldots,\phi^{(m)}_r(v^{(m)})\big)\\
 & \in M_{\tau(r)p_1p_2\ldots p_m}^+ \nonumber
 \end{split}
\end{align} so that, \begin{align}\begin{split}
  &  (\ot_{\mu_p} ( \phi^{(1)},\ldots,\phi^{(m)})_n(\alpha \ot_{\lambda_r}(v^{(1)},\ldots,v^{(m)})\alpha^*))\\
  & = \alpha \big((\ot_{\mu(p)} ( \phi^{(1)},\ldots,\phi^{(m)}))_{\tau_{\mu}(p)} ( \ot_{\lambda_j}(v_1,v_2,\cdots,v_m))\big)\alpha^* \in M_{nk_1k_2\ldots k_m}^+. \notag
  \end{split} \end{align} If $\lambda$ satisfies (O3), $\mu_p =\displaystyle \overset{p_1\ldots p_m}{\ot}$ gives the desired result.
\end{enumerate} \end{proof}

\vspace{3mm}

\paragraph{\textbf{Verification of Properties (O1)-(O3):}}
\begin{itemize}
\item \emph{Kronecker product:} Property (O1) reduces to  $$ \vep^{[r]}_{i_1,j_1}\ot\vep^{[r]}_{i_2,j_2} \ot \cdots \ot \vep^{[r]}_{i_m,j_m} = ( \vep^{[r]}_{j_1,i_1} \ot \vep^{[r]}_{j_2,i_2} \ot \cdots \ot \vep^{[r]}_{j_m,i_m})^*,$$ which is true.\\
To check for the condition (O3), recall that Kronecker product of two positive matrices is positive, but Kronecker product does not commute, in fact for any square matrices $A$ and $B$, there exists a permutation matrix $S$ such that $B \ot A = S (A \ot B) S^*$. Therefore, for some suitable permutation matrix $S$ we have:

 \begin{align}
\begin{split}
\displaystyle \overset{p_1\ldots p_m}{\ot_{\ot_r}}(\alpha_1 \ot \beta_1, \ldots, \alpha_m \ot \beta_m) & = (\beta_1 \ot \ldots \ot  \beta_m) \ot  \alpha_1 \ot \ldots \ot \alpha_m \\ & = S \big((\alpha_1 \ot \beta_1) \ot \cdots \ot (\alpha_m \ot \beta_m)\big)S^* \\ & \in M_{r^mp_1p_2\ldots p_m}^+, \notag \end{split}
\end{align} whenever $\alpha_i \ot \beta_i \in M_r(M_{p_i})^+$, $p_i \in \N$ $(i=1,2, \ldots m).$ \\
In order to verify (O2), we use the same notations as in proof of \cite[Proposition 4.2]{defantpolynomials}, let $\Delta : M_1 \ra M_{1,m}$, $x \mapsto (x,x,\ldots,x)$ and set $$P_1: = \sum_{r \in \{1,2,\ldots,r\}^m} \vep_{p,p}^{[\Delta r, \Delta 2r]} \qquad \text{and} \qquad  P_2: = \sum_{r \in \{r+1,r+2,\ldots,2r\}^m} \vep_{p- \Delta r,p}^{[\Delta r, \Delta 2r]},$$ and let $P=\begin{pmatrix}
P_1\\P_2
\end{pmatrix}$ 

\begin{align}\label{b6}
\begin{split}
& P\ot_{2r} (\vep^{[2r]}_{i_1,j_1},\cdots, \vep_{i_m,j_m}^{[2r]}) P^* \\
& = P \vep_{(i_1,\ldots,i_m),(j_1,\ldots,j_m)}^{[\Delta 2r, \Delta 2r]} P^*\\
&=\left(
                                                                              \begin{array}{cc}
                                                                                \vep_{(i_1,\cdots,i_m),(j_1,j_2,\cdots,j_m)}^{[\Delta r
,\Delta r]} &  \vep_{(i_1,\cdots,i_m),(j_1,j_2,\cdots,j_m)- \Delta r}^{[\Delta r
,\Delta r]}\\                                                                       \vep_{(i_1,\cdots,i_m)- \Delta r ,(j_1,j_2,\cdots,j_m)}^{[\Delta r
,\Delta r]}        & \vep_{(i_1,\cdots,i_m)-\Delta r,(j_1,j_2,\cdots,j_m)-\Delta r}^{[\Delta r
,\Delta r]} \\
                                                                              \end{array}
                                                                            \right)\\
&= 
\left\{
        \begin{array}{ll}
         \mathrm{adiag}\big( \vep_{(i_1,\cdots,i_m),(j_1,j_2,\cdots,j_m)- \Delta r}^{[\Delta r
,\Delta r]},0\big) & \qquad \text{if}\;\; (i_k,j_k) \in \{1,\cdots,r\} \times \{r+1,\cdots,2r\}, \\
          \mathrm{adiag}\big( 0, \vep_{(i_1,\cdots,i_m)- \Delta r ,(j_1,j_2,\cdots,j_m)}^{[\Delta r
,\Delta r]} \big) & \qquad \text{if}\;\; (i_k,j_k) \in \{r+1,\cdots,2r\} \times \{1,\cdots,r\}\\ 0 & \qquad \text{else}
          \end{array}
    \right.\notag 
   \end{split}
   \end{align}

   \item \emph{Schur product:} Here property (O1) takes the form $$ \vep^{[r]}_{i_1,j_1}\odot\vep^{[r]}_{i_2,j_2} \odot \cdots \odot \vep^{[r]}_{i_m,j_m} = ( \vep^{[r]}_{j_1,i_1} \odot \vep^{[r]}_{j_2,i_2} \odot \cdots \odot \vep^{[r]}_{j_m,i_m})^*,$$ which is true.\\To check for the condition (O3), recall that Schur product of two positive matrices is positive, for any square matrices  $A,B,C \; \text{and}\; D$ of order $n$,  $(A \odot B) \ot (C \odot D) =(A \ot C )\odot (B \ot D) $ (see \cite[Proposition 10.5]{wiesnerpolynomials}) and there exist a matrix $\mathcal{E} \in M_{n,k}$, $A', B' \in M_{k}$ such that $(A \ot B)=\mathcal{E} (A' \odot B') \mathcal{E}^*$. Therefore we have,
 \begin{align}
\begin{split}
\displaystyle \overset{p_1\ldots p_m}{\ot_{\odot_r}}(\alpha_1 \ot \beta_1, \ldots, \alpha_m \ot \beta_m) & = (\beta_1 \odot \ldots \odot  \beta_m) \ot  \alpha_1 \ot \ldots \ot \alpha_m 
\\ &  \in M_{rp_1p_2\ldots p_m}^+, \notag \end{split}
\end{align} whenever $\alpha_i \ot \beta_i \in M_r(M_{p_i})^+$, $p_i \in \N$ $(i=1,2, \ldots m).$ \\Again as for (E2), for $r \in \mathbb{N}$ and $(i_q,j_q)\in [\{1,\cdots,r\} \times \{r+1,\cdots,2r\}] \cup [\{r+1,\cdots,2r\} \times \{1,\cdots,r\}] $, let $P:= I_{2r}$, we have
\begin{align}
\begin{split}       
      & P \odot_{2r} (\vep_{i_1,j_1}^{[2r]}, \cdots, \vep_{i_m,j_m}^{[2r]})P^*\\
      & =\left(
                                                                                             \begin{array}{cc}
                                                                                              0&  \vep_{i_1,j_1-r}^{[r]}  \\
                                                                                              \vep_{i_1-r,j_1}^{[r]}  & 0 \\
                                                                                             \end{array}
                                                                                           \right)
       \odot \cdots \odot \left(
                         \begin{array}{cc}
                           0 & \vep_{i_m,j_m-r}^{[r]} \\
                            \vep_{i_m-r,j_m}^{[r]} & 0 \\
                         \end{array}
                       \right)\\
       &=  \left(
                         \begin{array}{cc}
                           0 &\vep_{i_1,j_1-r}^{[r]} \odot\cdots \odot \vep_{i_m,j_m-r}^{[r]} \\
                           \vep_{i_1-r,j_1}^{[r]} \odot \cdots \odot  \vep_{i_m-r,j_m}^{[r]} & 0 \\
                         \end{array}
                       \right)  \notag
                       \end{split}
                       \end{align} implying that (O2) holds.
\item \emph{Matrix Product:} One can easily see that this product may not satisfy (O1), (O2) and (O3).
\item \emph{Mixed Product:} One can mix above listed products to construct a new one, for example \cite[Chapter 9]{wiesnerpolynomials}, $\lambda =(\lambda_k)_k$  with
\begin{align*} \lambda_k : M_{k} \times M_k \times M_k \times M_k \ra M_{k^2},\notag \\
(\alpha_1,\alpha_2,\alpha_3,\alpha_4) \mapsto (\alpha_1 \bullet \alpha_2) \ot  (\alpha_3 \bullet \alpha_4) \notag \end{align*} Clearly, it does not satisfy any of the (O1)-(O3) as $\bullet$ does not.

One can similarly talk of $\lambda =(\lambda_k)_k$  with
\begin{align*}\lambda_k : M_{k} \times M_k \times M_k \times M_k \ra M_{k^2},\notag \\(\alpha_1,\alpha_2,\alpha_3,\alpha_4) \mapsto (\alpha_1 \odot \alpha_2) \ot  (\alpha_3 \odot \alpha_4),\end{align*} which clearly satisfies all the conditions (O1)-(O3).    \hspace{25mm}         \qedsymbol 
 \end{itemize}  The self-adjoint elements in $M_n(\ot_\lambda V_i)$ have a special representation:

\begin{proposition}\label{selfadjoint}
Let $V_i$, $1\leq i \leq m$, be  matrix ordered operator spaces and let $\lambda=(\lambda_n)_{n\in \N}$ be a sequence of $m$-multilinear mappings satisfying \emph{(O1)} and \emph{(O2)}. If $u\in M_n(\ot_\lambda V_i)_{sa},$ then $u$ has a representation: $$u=\alpha\ot_{\lambda_j}(x_1,x_2,\cdots,x_m)\alpha^*$$ where $\alpha\in M_{n,\tau(j)}$, $x_t\in M_j(V_t)_{sa}$, $t=1,2,\cdots,m.$ Moreover, 
\begin{align*}
\begin{split}
\|u\|_{\lambda,n} & =\inf\big\{\|\alpha\|^2\|x_1\|\|x_2\|\cdots \|x_m\| :u=\alpha\ot_{\lambda_j}(x_1,x_2,\cdots,x_m)\alpha^*,\\ & \qquad \qquad \qquad \qquad \qquad \qquad \qquad \qquad \alpha\in M_{n,\tau(j)}, x_t\in M_j(V_t)_{sa}, j\in \mathbb{N}\big\}. \notag
\end{split}
\end{align*}
 \end{proposition}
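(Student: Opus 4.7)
The plan is to convert an arbitrary representation $u = \alpha \ot_{\lambda_r}(v^{(1)},\ldots,v^{(m)})\beta$ (available from the paragraph preceding Lemma~\ref{O1-3}) into the desired self-adjoint form $\gamma \ot_{\lambda_{2r}}(y_1,\ldots,y_m)\gamma^*$ by symmetrizing $u = \tfrac{1}{2}(u+u^*)$, rewriting the sum as a $2{\times}2$ block sandwich, and then appealing to the identity from Lemma~\ref{O1-3}(ii).

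First, Lemma~\ref{O1-3}(i) (which uses (O1)) gives $u^* = \beta^* \ot_{\lambda_r}((v^{(1)})^*, \ldots, (v^{(m)})^*)\alpha^*$, so that $u = u^*$ yields
\begin{align*}
2u \;=\; \begin{pmatrix}\alpha & \beta^*\end{pmatrix} \begin{pmatrix} 0 & \ot_{\lambda_r}(v^{(1)},\ldots,v^{(m)}) \\ \ot_{\lambda_r}((v^{(1)})^*,\ldots,(v^{(m)})^*) & 0 \end{pmatrix} \begin{pmatrix} \alpha^* \\ \beta \end{pmatrix}.
\end{align*}
Taking $u^{(t)} = \tilde{u}^{(t)} = 0$ in Lemma~\ref{O1-3}(ii) rewrites the middle factor as $P \ot_{\lambda_{2r}}(y_1,\ldots,y_m)P^*$ for the permutation matrix $P \in M_{2\tau(r),\tau(2r)}$ of (E2) and $y_t := \begin{pmatrix} 0 & v^{(t)} \\ (v^{(t)})^* & 0 \end{pmatrix} \in M_{2r}(V_t)_{sa}$. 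Setting $\gamma := \tfrac{1}{\sqrt{2}}\begin{pmatrix}\alpha & \beta^*\end{pmatrix} P \in M_{n,\tau(2r)}$ absorbs the $\tfrac{1}{2}$ factor and produces the required representation $u = \gamma \ot_{\lambda_{2r}}(y_1,\ldots,y_m)\gamma^*$ with $j = 2r$.

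The inequality $\|u\|_{\lambda,n} \leq \inf\{\|\alpha\|^2\|x_1\|\cdots\|x_m\|\}$ is immediate from \eqref{lnorm}. For the reverse, fix $\epsilon > 0$ and choose a representation with $\|\alpha\|\|v^{(1)}\|\cdots\|v^{(m)}\|\|\beta\| < \|u\|_{\lambda,n}+\epsilon$ (assuming $u \neq 0$, in which case both $\alpha,\beta$ may be taken nonzero). Replacing $(\alpha,\beta)$ by $(c\alpha,c^{-1}\beta)$ with $c = \sqrt{\|\beta\|/\|\alpha\|}$ preserves $u$ and the product $\|\alpha\|\|\beta\|$ while making $\|\alpha\|=\|\beta\|$. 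Using $\|P\|\leq 1$ (from (E2)) and $\|(\alpha,\beta^*)\|^2 = \|\alpha\alpha^* + \beta^*\beta\| \leq \|\alpha\|^2+\|\beta\|^2$,
\begin{align*}
\|\gamma\|^2 \;\leq\; \tfrac{1}{2}(\|\alpha\|^2+\|\beta\|^2) \;=\; \|\alpha\|\|\beta\|,
\end{align*}
and the off-diagonal identity $\|y_t\| = \|v^{(t)}\|$, which follows from the isometric $*$-structure on $M_{2r}(V_t)$, then gives $\|\gamma\|^2\|y_1\|\cdots\|y_m\| < \|u\|_{\lambda,n}+\epsilon$. Letting $\epsilon\to 0$ closes the estimate.

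The main delicate point is the constant bookkeeping: the $\tfrac{1}{2}$ from symmetrization must be absorbed exactly (not merely up to a multiplicative constant) into $\|\gamma\|^2$, which forces both the $\|\alpha\|=\|\beta\|$ normalization and an appeal to $\|P\|\leq 1$ rather than a weaker bound. The auxiliary identity $\|y_t\| = \|v^{(t)}\|$ is a second point where the matrix ordered (and not merely operator space) structure is essential.
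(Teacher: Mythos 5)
Your proposal is correct and follows essentially the same route as the paper: symmetrize $u=\tfrac12(u+u^*)$ using (O1) for the adjoint representation, fold the anti-diagonal block into a single $\ot_{\lambda_{2r}}$ via Lemma~\ref{O1-3}(ii), and balance the norms of $\alpha$ and $\beta$. The only cosmetic difference is that you pre-normalize $\|\alpha\|=\|\beta\|$ by rescaling, whereas the paper introduces a parameter $\mu$ and minimizes $\tfrac12(\mu^2\|\beta\|^2+\mu^{-2}\|\alpha\|^2)$ — these are the same optimization.
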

\begin{proof}
Suppose $u\in M_n(\ot_\lambda V_i)_{sa}$. Given $\epsilon>0$, there exist $\alpha\in M_{n,\tau(j)}$, $\beta\in M_{\tau(j),n }$, $x_t\in M_j(V_t)$ such that
\begin{align}
\begin{split}
u & =\alpha \ot_{\lambda_j}(x_1,x_2,\cdots,x_m)\beta \qquad \text{with}
\\ 
\|u\|_{\lambda,n} & \leq \|\alpha\|\|x_1\|\|x_2\|\cdots \|x_m\|\|\beta\|\leq \|u\|_{\lambda,n}+\epsilon. \notag
\end{split}
\end{align}
 As $u$ is self adjoint by \Cref{O1-3}(ii), for any $\mu> 0$, we have
 \begin{align}
 \begin{split}
u & =\frac{1}{2}(u+u^*) \\
& =\frac{1}{2}(\mu \alpha\ot_{\lambda_j}(x_1,x_2,\cdots,x_m) \mu^{-1} \beta + \mu^{-1} \beta^*\ot_{\lambda_j}(x_1^*,x_2^*,\cdots,x_m^*)\mu \alpha^* ) \\
& =\left(
        \begin{array}{cc}
        \frac{\mu^{-1} \beta^*}{\sqrt{2}} & \frac{\mu\alpha}{\sqrt{2}} \\
        \end{array}
   \right)
 \left(
        \begin{array}{cc}
        0& \ot_{\lambda_j}(x_1^*,x_2^*,\cdots,x_m^*) \\
        \ot_{\lambda_j}(x_1,x_2,\cdots,x_m) & 0\\
        \end{array}
 \right)
 \left(
       \begin{array}{c}
       \frac{\mu^{-1} \beta}{\sqrt{2}} \\
        \frac{\mu \alpha^*}{\sqrt{2}}\\
        \end{array}
 \right)\\
& = \alpha P\big(\ot_{\lambda_{2j}}(v^1,v^2,\cdots,v^m)\big)P^* \alpha^* \notag
\end{split}
\end{align} where 
  $v^t=\left(
            \begin{array}{cc}
             0 & x_t^* \\
             x_t & 0 \\
            \end{array}
       \right)$,  and 
  $\alpha= \left(
                         \begin{array}{cc}
                          \frac{\mu^{-1} \beta^*}{\sqrt{2}} & \frac{\mu \alpha}{\sqrt{2}} \\
                  \end{array}
                \right)$, $R=\{1,2,\cdots,r\}\times \{r+1,\cdots,2r\}$ and  $S=\{r+1,r+2,\cdots, 2r\}\times \{1,\cdots,r\}$.\\ Thus, $\|u\|_{\lambda, n}\leq \frac{1}{2}(\mu^2\|\beta\|^2+\mu^{-2}\|\alpha\|^2)]\|v^1\| \|v^2\| \cdots \|v^m\|$, where for each $t$, $v^t$  is a self adjoint element.
                
Since, $\ds\min_{\mu>0}\frac{1}{2}(\mu^2\|\beta\|^2+\mu^{-2}\|\alpha\|^2)= \|\beta\|\|\alpha\|$, given $\delta>0$ we can choose $\mu_0>0$ such that $\|\beta\|\|\alpha\|+\delta > \frac{1}{2}(\mu_0^2\|\beta\|^2+\mu_0^{-2}\|\alpha\|^2)$. Therefore,  $\|u\|_{\lambda, n}\leq \|\tilde{\alpha}\|^2\|v^1\| \|v^2\| \cdots \|v^m\|\leq \|\beta\|\|\alpha\|\|x_1\| \|x_2\| \cdots \|x_m\|$. Thus, we get the desired norm condition. \end{proof} 
\noindent We are now in a position to define an appropriate cone structure :

\begin{definition} Let $V_1,\cdots,V_m$ be matrix ordered operator spaces and $\lambda=(\lambda_n)_{n\in \N}$ be a sequence satisfying \emph{(O1)-(O2)}. We define
\begin{enumerate}[(a)]
\item $\mathcal{C}_n$=$\{ \alpha \ot_{\lambda_j}(v_1,v_2,\cdots,v_m)\alpha^*: v_t\in M_j(V_t)^{+}, \alpha\in M_{n,\tau(j)},j\in \mathbb{N}, t=1,2,\cdots,m \} \subset M_n(\ltp V_i)_{sa}$ and
\item $M_n(\ot_{\lambda} V_i)^+ : = \mC_n^{-\|\cdot\|_{\lambda,n}}.$ 
\end{enumerate}
\end{definition}

\begin{proposition}
For matrix ordered operator spaces $V_i,$ $1 \leq i \leq m$, $\big(\ot_{i=1}^m V_i,\{\|\cdot\|_{\lambda,n}\}_{n=1}^{\infty},M_n(\ot_{\lambda} V_i)^+\big)$ is a matrix
ordered operator space.
\end{proposition}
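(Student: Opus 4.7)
The plan is to verify the three axioms of a matrix ordered operator space for the triple $\bigl(\otimes_{i=1}^m V_i,\{\|\cdot\|_{\lambda,n}\}_n,\{M_n(\otimes_\lambda V_i)^+\}_n\bigr)$: the matrix-ordered vector space structure, the $*$-isometry, and closedness of the cones. Since $M_n(\otimes_\lambda V_i)^+$ is by construction the norm-closure of $\mathcal{C}_n$, closedness is automatic; moreover, the wedge, $*$-invariance, and compression-invariance of $\mathcal{C}_n$ will pass to the closure by continuity of the corresponding operations ($+$, $*$, and $(A,u)\mapsto A^*uA$, which satisfies $\|A^*uA\|_{\lambda,m}\le\|A\|^2\|u\|_{\lambda,n}$ directly from \eqref{lnorm}). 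Thus the proof reduces to establishing these properties on $\mathcal{C}_n$ together with the isometry of $*$.

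Closure of $\mathcal{C}_n$ under positive scalars is immediate by replacing $\alpha$ with $\sqrt{\mu}\,\alpha$. The key computation is additivity. Given $u_i=\alpha_i\otimes_{\lambda_{j_i}}(v^{(i)}_1,\ldots,v^{(i)}_m)\alpha_i^*\in\mathcal{C}_n$ for $i=1,2$, I would build a single decomposition on block diagonals. Set $w_t:=\mathrm{diag}(v^{(1)}_t,v^{(2)}_t)\in M_{j_1+j_2}(V_t)^+$ and $\tilde\alpha:=(\alpha_1,\alpha_2)\,P\in M_{n,\tau(j_1+j_2)}$, where $P\in M_{\tau(j_1)+\tau(j_2),\tau(j_1+j_2)}$ is the matrix supplied by (E2) at $(r,s)=(j_1,j_2)$. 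Expanding $\otimes_{\lambda_{j_1+j_2}}(w_1,\ldots,w_m)$ using that the entries of $w_t$ vanish off $R_1:=\{1,\ldots,j_1\}^2$ and $R_4:=\{j_1+1,\ldots,j_1+j_2\}^2$, and conjugating by $P$, property (E2) combined with the zero convention $\vep^{[k]}_{i,j}=0$ for $(i,j)\notin\{1,\ldots,k\}^2$ annihilates all \emph{mixed} tuples (some $(k_t,l_t)\in R_1$ and others in $R_4$) by multilinearity of $\lambda_{j_1}$ and $\lambda_{j_2}$, leaving
\[
P\,\bigl(\otimes_{\lambda_{j_1+j_2}}(w_1,\ldots,w_m)\bigr)\,P^* = \mathrm{diag}\!\bigl(\otimes_{\lambda_{j_1}}(v^{(1)}_1,\ldots,v^{(1)}_m),\ \otimes_{\lambda_{j_2}}(v^{(2)}_1,\ldots,v^{(2)}_m)\bigr).
\]
Sandwiching with $(\alpha_1,\alpha_2)$ and its adjoint recovers $u_1+u_2$, so $u_1+u_2=\tilde\alpha\,\otimes_{\lambda_{j_1+j_2}}(w_1,\ldots,w_m)\,\tilde\alpha^*\in\mathcal{C}_n$.

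The remaining axioms are short. $*$-invariance of $\mathcal{C}_n$ follows from Lemma \ref{O1-3}(i): since each $v_t\in M_j(V_t)^+$ is self-adjoint, an element of $\mathcal{C}_n$ already equals its own adjoint. Compression $A^*\mathcal{C}_n A\subseteq\mathcal{C}_m$ is just the substitution $\alpha\rightsquigarrow A^*\alpha$. For the isometric nature of $*$ on $M_n(\otimes_\lambda V_i)$, I would take a near-optimal decomposition $u=\alpha\otimes_{\lambda_j}(v_1,\ldots,v_m)\beta$ for $\|u\|_{\lambda,n}$, apply Lemma \ref{O1-3}(i) to write $u^*=\beta^*\otimes_{\lambda_j}(v_1^*,\ldots,v_m^*)\alpha^*$, and use $\|v_t^*\|=\|v_t\|$ (from the matrix-ordered structure of $V_t$) together with $\|\alpha^*\|=\|\alpha\|,\ \|\beta^*\|=\|\beta\|$ to conclude $\|u^*\|_{\lambda,n}\le\|u\|_{\lambda,n}$; involutivity gives equality. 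I expect the main obstacle to lie in the additivity argument—carefully tracking the index bookkeeping so that only the pure tuples survive the $P$-conjugation; the remaining axioms then drop out of Lemma \ref{O1-3} and standard continuity.
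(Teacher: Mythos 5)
Your proof is correct and follows essentially the same route as the paper: the heart of both arguments is the additivity of $\mathcal{C}_n$, obtained by forming the block-diagonal elements $w_t=\mathrm{diag}(v^{(1)}_t,v^{(2)}_t)$ and using the matrix $P$ from (E2) to fold the two representations into a single $\otimes_{\lambda_{j_1+j_2}}$ decomposition, with scalar multiplication and compression handled by absorbing factors into $\alpha$. Your treatment is if anything slightly more complete, since you verify the isometry of $*$ on all of $M_n(\otimes_\lambda V_i)$ directly from the decomposition in Lemma \ref{O1-3}(i) and note explicitly that the cone properties pass to the norm closure, whereas the paper only cites Proposition \ref{selfadjoint} for the self-adjoint part.
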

\begin{proof}
From \Cref{selfadjoint}, the involution is an isometry on $(M_n(\ot_{\lambda} V_i))_{sa}$ hence $M_n(\ot_{\lambda} V_i)^+$ is a cone provided $\mathcal{C}_n$ is.

Let $u_1=\alpha_1\ot_{\lambda_k}
(v_1,v_2,\cdots,v_m) \alpha^{*}_1\in \mC_{n_1}, u_2=\alpha_2 \ot_{\lambda_l}(w_1,w_2,\cdots,w_m)\alpha^{*}_2\in \mC_{n},$ then using \Cref{O1-3}(ii),
\begin{align}
\begin{split}
u_1+u_2 & =(\begin{array}{cc}
            \alpha_1 &\alpha_2 \\
            \end{array})
            \left(
                   \begin{array}{cc}
                   \ot_{\lambda_k}(v_1,v_2,\cdots,v_m) & 0 \\
                    0 & \ot_{\lambda_l}(w_1,w_2,\cdots,w_m) \\
                    \end{array}
             \right)
           \left(
                  \begin{array}{c}
                   \alpha^{*}_1 \\
                   \alpha^{*}_2 \\
                   \end{array}
            \right) \\
   & =\alpha P(\ot_{\lambda_{k+l}}(x_1,x_2,\cdots,x_m)) P^* \alpha^* \in \mC_n, \notag
   \end{split}
 \end{align}
where $x_t=diag(v_t,w_t)\in M_{k+l}(V)^+$ and $\alpha=\begin{pmatrix}\alpha_1 &\alpha_2 \\
    \end{pmatrix}$, hence the family $\{C_n\}$ is closed under addition. Now, for $t\geq 0$
    \begin{align}
    \begin{split}
    tu_1 = t^{1/2}\alpha_1\ot_{\lambda_k}(v_1,v_2,\cdots,v_m) t^{1/2}\alpha^{*}_1\in \mC_{n}. \notag
    \end{split}
    \end{align}
Also, for $\gamma\in M_{m,n}$ and $\alpha  \ot_{\lambda_k}(v_1,v_2,\cdots,v_m) \alpha^*\in \mC_m$,
 \begin{align}
       \gamma^* \alpha \ot_{\lambda_k}(v_1,v_2,\cdots,v_m) \alpha^{*} \gamma  = \gamma^* \alpha  \ot_{\lambda_k}(v_1,v_2,\cdots,v_m) (\gamma^*\alpha)^* \in \mC_n. \notag
\end{align} \end{proof}

\begin{remarks} Note that 
 $\begin{pmatrix}
u  & z\\
z^*  &   u'
\end{pmatrix}\in \mC_{2n}$, implies that $$u =\begin{pmatrix}
1_n & 0 
\end{pmatrix} \begin{pmatrix}
u  & z\\
z^*  &   u'
\end{pmatrix} \begin{pmatrix}
1_n \\ 0 
\end{pmatrix} \in  \mC_n \qquad \text{and} \qquad u' =\begin{pmatrix}
0 & 1_n  
\end{pmatrix} \begin{pmatrix}
u  & z\\
z^*  &   u'
\end{pmatrix} \begin{pmatrix}
0 \\1_n 
\end{pmatrix} \in  \mC_n,$$ so using \Cref{selfadjoint} 
\begin{align*}
\begin{split}
\|u\|_{\lambda,n} & =\inf\big\{\|\alpha\|^2\|x_1\|\|x_2\|\cdots \|x_m\| :u=\alpha\ot_{\lambda_j}(x_1,x_2,\cdots,x_m)\alpha^*,\\ & \qquad \qquad \qquad \qquad \qquad \qquad \qquad \qquad \alpha\in M_{n,\tau(j)}, x_t\in M_j(V_t)^+, j\in \mathbb{N}\big\} \\
\text{and} & \\
\|u'\|_{\lambda,n} & =\inf\big\{\|\alpha'\|^2\|x_1'\|\|x_2'\|\cdots \|x_m'\| :u'=\alpha'\ot_{\lambda_r}(x_1',x_2',\cdots,x_m')\alpha'^*,\\ & \qquad \qquad \qquad \qquad \qquad \qquad \qquad \qquad \alpha'\in M_{n,\tau(r)}, x'_t\in M_r(V_t)^+, j\in \mathbb{N}\big\}. 
\notag
\end{split}
\end{align*}
\end{remarks}
Motivated by this and Han's \cite[Definition 3.2]{hanpredual}, we relate a suitable norm to the cone $\mC_n$ defined above that behaves well with matrix regular operator spaces.
\begin{definition} Let $V_i,$ $1 \leq i \leq m$ be matrix ordered operator spaces and $\lambda=(\lambda_n)_{n\in \N}$ be a sequence satisfying \emph{(O1)-(O3)}. Then for $z$ in $M_n(\ot_{\lambda}V_i)$, we define: $$\|z\|_{\Lambda,n}:=\inf \{\max\{\|u\|_{\lambda,n}, \|u'\|_{\lambda,n}\}: 
\left(\begin{matrix}
u  & z\\
z^*  &   u'
\end{matrix}\right)\in \mC_{2n} \}. $$
\end{definition}
Note that if $u \in \mC_n$, then 
 $$ \begin{pmatrix}
 u & u \\
 u & u
 \end{pmatrix} = \begin{pmatrix}
 1 \\
 1
 \end{pmatrix} u \begin{pmatrix}
 1 & 1
 \end{pmatrix} \in \mC_{2n}.$$ Therefore $\|\cdot\|_{\Lambda,n} \leq \|\cdot\|_{\lambda,n}$ on $\mC_n$. The set $\bigg\{\max\{\|u\|_{\lambda,n}, \|u'\|_{\lambda,n}\}: 
\left(\begin{matrix}
u  & z\\
z^*  &   u'
\end{matrix}\right)\in \mC_{2n} \bigg \}$ is non empty from \Cref{propercone}(i) and (ii) proved below.

\begin{proposition}\label{propercone} If $V_i,$ $1 \leq i \leq m$ are matrix regular operator spaces and $\lambda=(\lambda_n)_{n\in \N}$ a sequence satisfying \emph{(O1)-(O3)}, then
\begin{enumerate}[(i)]
\item  $\mC_n$ is a proper cone in $M_n(\ot_{i=1}^{m} V_i)$ for all $n \in \N$.
\item  For $z\in M_n(\ot_{i=1}^{m} V_i)$, there exist elements $u_1$, $u_2$ in $\mC_n$ such that
 $\left(\begin{matrix}
u_1  & z\\
z^*  &   u_2
\end{matrix}\right)\in \mC_{2n}$.
\item $\|\cdot\|_{\Lambda,n}$ is a norm on $ M_n(\ot_{i=1}^{m} V_i).$
\end{enumerate}
\end{proposition}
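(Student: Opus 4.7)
The plan is to prove (ii) first (it is constructive), then deduce (i) via Lemma~\ref{O1-3}(iii), and finally combine both to establish (iii). Throughout, $\Phi := \phi^{(1)} \ot \cdots \ot \phi^{(m)}$ will denote the tensor product of any choice of completely positive maps $\phi^{(i)}\colon V_i \to M_{p_i}$.

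\emph{Proof of (ii).} Fix a representation $z = \alpha \ot_{\lambda_j}(v_1,\ldots,v_m) \beta$ with $\alpha \in M_{n,\tau(j)}$, $\beta \in M_{\tau(j),n}$, $v_t \in M_j(V_t)$. By matrix regularity of each $V_t$ (the characterization in Theorem~2.1, applied after rescaling $v_t$), there exist $a_t, d_t \in M_j(V_t)^+$ with $w_t := \begin{pmatrix} a_t & v_t \\ v_t^* & d_t \end{pmatrix} \in M_{2j}(V_t)^+$. Lemma~\ref{O1-3}(ii) then gives
\begin{align*}
\begin{pmatrix} \ot_{\lambda_j}(a_1,\ldots,a_m) & \ot_{\lambda_j}(v_1,\ldots,v_m) \\ \ot_{\lambda_j}(v_1^*,\ldots,v_m^*) & \ot_{\lambda_j}(d_1,\ldots,d_m) \end{pmatrix} = P\, \ot_{\lambda_{2j}}(w_1,\ldots,w_m)\, P^*,
\end{align*}
and conjugating by $A := \begin{pmatrix} \alpha & 0 \\ 0 & \beta^* \end{pmatrix} P \in M_{2n, \tau(2j)}$ exhibits $\begin{pmatrix} u_1 & z \\ z^* & u_2 \end{pmatrix} = A \ot_{\lambda_{2j}}(w_1,\ldots,w_m) A^* \in \mC_{2n}$, with $u_1 = \alpha \ot_{\lambda_j}(a_1,\ldots,a_m) \alpha^*$ and $u_2 = \beta^* \ot_{\lambda_j}(d_1,\ldots,d_m) \beta$ automatically in $\mC_n$.

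\emph{Proof of (i).} Take $u \in \mC_n \cap (-\mC_n)$. By Lemma~\ref{O1-3}(iii) (invoking (O3)), both $\Phi_n(u)$ and $\Phi_n(-u)$ lie in $M_{np_1\cdots p_m}^+$, so $\Phi_n(u) = 0$ for every $\Phi$. Since each $V_i$ is matrix regular, $M_k(V_i)^+$ is proper and generating at every matrix level, and hence completely positive maps $V_i \to M_{p_i}$ form a separating family on each $M_k(V_i)$; tensor products of separating families separate algebraic tensor products, forcing $u = 0$.

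\emph{Proof of (iii) and main obstacle.} Non-emptiness of the defining set is (ii), and non-negativity is clear. Absolute homogeneity follows by conjugating $\begin{pmatrix} u & z \\ z^* & u' \end{pmatrix} \in \mC_{2n}$ by $\mathrm{diag}(\sqrt{|t|}\,I_n,\; \sqrt{|t|}\,e^{-i\theta} I_n)$ for $t = |t|e^{i\theta}$; subadditivity follows because $\mC_{2n}$ is closed under addition and $\max(a_1+a_2, b_1+b_2) \leq \max(a_1,b_1) + \max(a_2,b_2)$. For positive definiteness, suppose $\|z\|_{\Lambda,n} = 0$ and pick $u_k, u_k' \in \mC_n$ with $\|u_k\|_{\lambda,n}, \|u_k'\|_{\lambda,n} \to 0$ and $\begin{pmatrix} u_k & z \\ z^* & u_k' \end{pmatrix} \in \mC_{2n}$; since $\Phi$ is $\lambda$-continuous (the $\phi^{(i)}$ are completely bounded, so the $\lambda$-tensor universal property applies), closedness of $M_{2np_1\cdots p_m}^+$ forces $\begin{pmatrix} 0 & \Phi(z) \\ \Phi(z)^* & 0 \end{pmatrix} \geq 0$, whence $\Phi(z) = 0$ and the separation argument of (i) yields $z = 0$. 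The delicate point throughout is this separation: one must ensure that matrix regularity of each $V_i$ produces enough completely positive maps into matrix algebras to separate matrix levels of $V_i$, and that this separation is inherited by the algebraic tensor product via tensor products of CP maps.
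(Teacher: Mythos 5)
Your overall strategy coincides with the paper's: part (ii) via matrix regularity of the factors plus \Cref{O1-3}(ii) and conjugation by $\bigl(\begin{smallmatrix} \alpha & 0\\ 0 & \beta^*\end{smallmatrix}\bigr)P$; part (i) and the definiteness half of (iii) by pushing elements of $\mC_n$ forward with tensor products of completely positive maps via \Cref{O1-3}(iii) and then separating. The block-matrix manipulations, the homogeneity and subadditivity arguments, and the use of positivity of the image block matrix are all in line with the paper (for definiteness in (iii) the paper avoids your limit-and-closedness argument by the elementary estimate $\|x\|\le\max\{\|a\|,\|d\|\}$ valid whenever $\bigl(\begin{smallmatrix} a& x\\ x^*& d\end{smallmatrix}\bigr)\ge 0$, but both routes work).

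The one step you flag but do not actually close is the separation claim, and the reason you offer for it is not sufficient: properness and generation of the cones $M_k(V_i)^+$ are properties of $V_i$ alone and by themselves produce no completely positive maps $V_i\to M_{p_i}$, let alone a separating family (a matrix ordered space can have generating cones while admitting very few positive maps into matrix algebras). What the paper uses to close this is Schreiner's duality theorem \cite{schreiner} (Corollary 6.7 there): if $V$ is matrix regular then so is $V^*$, hence $M_n(V^*)=CB(V,M_n)$ is spanned by its positive cone $CB(V,M_n)\cap CP(V,M_n)$, i.e.\ every completely bounded map into a matrix algebra is a linear combination of completely positive ones. Consequently, vanishing of $(\ot_{t=1}^m f^{(t)})_n(z)$ for all c.p.\ $f^{(t)}$ forces vanishing for all c.b.\ $f^{(t)}$, so $\|z\|_{M_n(\check{\ot} V_i)}=0$ and $z=0$ because the injective operator space tensor norm is a genuine norm on the algebraic tensor product. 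Your principle that tensor products of separating families separate the algebraic tensor product is fine, but you still need this duality fact to know the c.p.\ maps separate in the first place; with that citation supplied, your argument is complete and matches the paper's.
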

\begin{proof} \begin{enumerate}[(i)]
\item Let $z \in \mC_n \cap -\mC_n$, then $z= \alpha \ot_{\lambda_j}(v^1,v^2,\cdots,v^m)\alpha^* \; \text{with}\; v^t\in M_j(V_t)^+,\alpha \in M_{n,\tau(j)}.$
By \Cref{O1-3}(iii), for continuous c.p. maps $\phi^t : V_t \ra M_{k_t}$, $t=1,\ldots,m$, we have
$$(\ot_{t=1}^m \phi^t)_n(z) \in M_{nk_1k_2\ldots k_m}^+ \cap - M_{nk_1k_2\ldots k_m}^+=\{0\}.$$ Now each $V_t$ being matrix regular its dual $V_t^*$ is also matrix regular (\cite[Corollary 6.7]{schreiner}), hence each completely bounded linear map from $V_t$ into a matrix algebra is actually a linear combination of some c.p. maps. Thus, even for c.b. maps $f^{(t)} : V_t \ra M_{k_t}$, $t=1,\ldots,m$; $$(\ot_{t=1}^m f^{(t)})_n(z) \in M_{nk_1k_2\ldots k_m}^+ \cap -M_{nk_1k_2\ldots k_m}^+=\{0\},$$ which further implies the operator space injective tensor norm is given by, $$\|z\|_{ M_n(\check{\ot} V_i)}=0, \;\; \text{giving}\;\; z=0.$$
\item Using \cite[Proposition 4.1]{defantpolynomials}, for any $z \in M_n(\ot_{i=1}^{m} V_i)$ can be written as: $$z=\alpha \ot_{\lambda_j}(v_1,v_2,\cdots,v_m)\beta^* \text{ for } v_t\in M_j(V_t),\; \alpha,\beta\in M_{n,\tau(j)},\; t=1,2,\cdots,m.$$ 
As each $V_i$ is  a matrix regular operator space, there exist $v_t^1,v_t^2\in M_j(V_t)^{+}$ such that
\[x_t= \left(\begin{matrix}
v_t^1  & v_t\\
v_t^*  &   v_t^2
\end{matrix}\right)\in M_{2j} (V_t)^{+}, t=1,2,\cdots,m.\]
Using \Cref{O1-3}(ii), we have
\begin{align}
\begin{split}
& \left(\begin{matrix}
\alpha \ot_{\lambda_j}(v_1^1,v_2^1,\cdots,v_m^1)\alpha^* & \alpha \ot_{\lambda_j}(v_1,v_2,\cdots,v_m)\beta^*\\
\beta \ot_{\lambda_j}(v_1^*,v_2^*,\cdots,v_m^*)\alpha^* & \beta \ot_{\lambda_j}(v_1^2,v_2^2,\cdots,v_m^2)\beta^*   
\end{matrix}\right)\\
& = \left(\begin{matrix}
\alpha  & 0\\
0  &   \beta
\end{matrix}\right) \left(\begin{matrix}
\ot_{\lambda_j}(v_1^1,v_2^1,\cdots,v_m^1)  &  \ot_{\lambda_j}(v_1,v_2,\cdots,v_m)\\
 \ot_{\lambda_j}(v_1^*,v_2^*,\cdots,v_m^*)  &    \ot_{\lambda_j}(v_1^2,v_2^2,\cdots,v_m^2)
\end{matrix}\right)   \left(\begin{matrix}
\alpha^*  & 0\\
0  &   \beta^*
\end{matrix}\right)\\
& = \left(\begin{matrix}
\alpha  & 0\\
0  &   \beta
\end{matrix}\right) 
\ot_{\lambda_{2j}}\left( \left(\begin{matrix}
v_1^1  & v_1\\
v_1^*  &   v_1^2
\end{matrix}\right) , \left(\begin{matrix}
v_2^1  & v_2\\
v_2^*  &   v_2^2
\end{matrix}\right) ,\cdots, \left(\begin{matrix}
v_m^1  & v_m\\
v_m^*  &   v_m^2
\end{matrix}\right) \right)
   \left(\begin{matrix}
\alpha^*  & 0\\
0  &   \beta^* \notag
\end{matrix}\right) \in \mC_{2n}. 
\end{split}
\end{align} 
\item $\|\cdot\|_{\Lambda,n}$ clearly satisfies  homogeneity.

Let $z_1,z_2 \in M_n(\ot_{i=1}^m V_i)$ be any elements, then by (ii) above there exist $u_i, u_i' \in \mC_n$ such that
\begin{align*}
\begin{pmatrix}
u_i & z_i \\
z_i^* & u_i'
\end{pmatrix} \in \mC_{2n} \qquad \qquad \text{and} \qquad \qquad \|u_i\|_{\lambda,n},\|u_i'\|_{\lambda,n} < \|z\|_{\Lambda,n} + \epsilon.
\end{align*}
From $$\begin{pmatrix}
 u_1 + u_2 & z_1 + z_2 \\
 z_1^* + z_2^* & u_1' + u_2'
\end{pmatrix} \in \mC_{2n}$$
it follows that 
\begin{align}
\begin{split}
\|z_1 +z_2 \|_{\Lambda,n} & \leq  \max \{\|u_1+u_2\|_{\lambda,n}, \|u_1'+u_2'\|_{\lambda,n}\}\\
 & \leq  \max\{ \|u_1\|_{\lambda,n}+\|u_2\|_{\lambda,n}, \|u_1'\|_{\lambda,n}+\|u_2'\|_{\lambda,n}\} \\
 & <  \|z_1\|_{\Lambda,n} + \|z_2\|_{\Lambda,n} + 2 \epsilon \notag
\end{split}
\end{align}
Let $\|z\|_{\Lambda,n}=0$. Given $\epsilon >0$, there exist $u, u' \in \mC_n$ such that
\begin{align*}
\begin{pmatrix}
u & z \\
z^* & u'
\end{pmatrix} \in \mC_{2n} \qquad \qquad \text{and} \qquad \qquad \|u\|_{\lambda,n},\|u'\|_{\lambda,n} <  \epsilon.
\end{align*}
Again by \Cref{O1-3}(iii), for c.c.p. maps $f^{(t)} : V_t \ra M_{k_t}$, $t=1,\ldots,m$; \begin{align}
\begin{split}
&\begin{pmatrix}
(\ot_{t=1}^m f^{(t)})_n(u) & (\ot_{t=1}^m f^{(t)})_n(z) \\
(\ot_{t=1}^m f^{(t)})_n(z^*) & (\ot_{t=1}^m f^{(t)})_n(u')
\end{pmatrix}\\
& =(\ot_{t=1}^m f^{(t)})_{2n}\Bigg(\begin{pmatrix}
u & z \\
z^* & u'
\end{pmatrix} \Bigg)
 \in M_{2nk_1k_2\ldots k_m}^+.\notag
\end{split}
\end{align}
 It follows that \begin{align}
 \begin{split}
  \|(\ot_{t=1}^m f^{(t)})_n(z)\| & \leq \max \{\|\ot_{t=1}^m f^{(t)})_n(u) \|, \|\ot_{t=1}^m f^{(t)})_n(u') \|\}\\ & \leq \max \{ \|u\|_{\lambda,n},\|u'\|_{\lambda,n}\} < \epsilon.\notag
  \end{split}
  \end{align}
  Thus $(\ot_{t=1}^m f^{(t)})_n(z)=0$ and using matrix regularity as in case (i), $\|z\|_{ M_n(\check{\ot} V_i)}=0$, which implies $z=0$.
\end{enumerate} \end{proof}

 The positive cones of matrix ordered operator spaces are closed. Therefore, we consider $M_n(\ot_{\Lambda} V_i)^+ : =\mC_n^{-\|\cdot\|_{\Lambda,n}}$. From the definition of $\|\cdot\|_{\Lambda,n}$, since\\ $\begin{pmatrix}
u & z \\
z^* & u'
\end{pmatrix} \in \mC_{2n}$ if and only if $\begin{pmatrix}
u' & z^* \\
z & u
\end{pmatrix} =\begin{pmatrix}
 0 & 1 \\
 1 & 0
\end{pmatrix}\begin{pmatrix}
u & z \\
z^* & u'
\end{pmatrix}\begin{pmatrix}
 0 & 1 \\
 1 & 0
\end{pmatrix} \in \mC_{2n}$, we have $\|z^*\|_{\Lambda,n}= \|z\|_{\Lambda,n}$. Thus, the involution is an isometry on $(M_n(\ot_{i=1}^{m} V_i), \|\cdot\|_{\Lambda,n})$. Hence $M_n(\ot_{\Lambda} V_i)^+$ is a proper cone.

Recall from \cite[Definition 4.7]{wiesnerpolynomials} that an operator space matrix norm $\|\cdot\|_{\alpha}$ is said to be $\lambda$-subcross if
$$\|\ot_{p}(v_1,v_2,\ldots,v_m)\|_{(\alpha,\tau(p))} \leq \|v_1\|\ldots\|v_m\|$$
for all $p \in \N$ and $v_i \in M_p(V_i).$ In case of equality the norm is called $\lambda$-cross.

\begin{theorem}
For matrix regular operator spaces $V_i$ and $\lambda=(\lambda_n)_{n\in \N}$ satisfying \emph{(O1)-(O3)}, $(\ot_{i=1}^m V_i,\{\|\cdot\|_{\Lambda,n}\}_{n=1}^{\infty}, M_n(\ot_{\Lambda} V_i)^{+})$ is a  matrix regular operator space with $\lambda$-subcross matrix norm.
\end{theorem}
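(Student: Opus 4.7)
My plan is to verify in order: (a) Ruan's axioms for $\|\cdot\|_{\Lambda,n}$; (b) the matrix-ordering axioms for the cones $M_n(\otimes_{\Lambda}V_i)^{+}$; (c) Schreiner's block characterization of matrix regularity; and (d) the $\lambda$-subcross inequality. Axiom (R1) is handled by taking a $\mC_{2n}$-witness for $\|z\|_{\Lambda,n}$, conjugating it by $\mathrm{diag}(t^{1/2}\alpha,\,t^{-1/2}\beta^{*})$ and optimizing in $t>0$, exactly as in the proof of \Cref{selfadjoint}; axiom (R2) follows from closure of $\mC$ under direct sums (via (E2)) for the $\le$ direction and from canonical compressions for the $\ge$ direction. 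The ordering axioms are then automatic: compression invariance of $\mC_n$ under $A^{*}(\cdot)A$ passes to its $\Lambda$-closure by continuity, the involution is $\Lambda$-isometric as noted in the excerpt, and the cones are closed by construction.

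For the $\lambda$-subcross bound (d), I would fix $v_i\in M_p(V_i)$ and invoke Schreiner's criterion for each matrix regular $V_i$ to choose $v_i^{1},v_i^{2}\in M_p(V_i)^{+}$ of norm less than $\|v_i\|+\epsilon$ with $\tilde v_i:=\begin{pmatrix}v_i^{1} & v_i \\ v_i^{*} & v_i^{2}\end{pmatrix}\in M_{2p}(V_i)^{+}$. Then $\otimes_{\lambda_{2p}}(\tilde v_1,\ldots,\tilde v_m)\in \mC_{\tau(2p)}$, and \Cref{O1-3}(ii) together with the bound $\|P\|\le 1$ from (E2) identifies $\begin{pmatrix}\otimes_{\lambda_p}(v_1^{1},\ldots,v_m^{1}) & \otimes_{\lambda_p}(v_1,\ldots,v_m) \\ \otimes_{\lambda_p}(v_1^{*},\ldots,v_m^{*}) & \otimes_{\lambda_p}(v_1^{2},\ldots,v_m^{2})\end{pmatrix}\in\mC_{2\tau(p)}$. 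Since $\|\cdot\|_{\lambda,\tau(p)}$ is tautologically $\lambda$-subcross (take $\alpha=\beta=I$ in \eqref{lnorm}), the diagonal entries have $\lambda$-norm at most $\prod_i\|v_i^{j}\|<\prod_i(\|v_i\|+\epsilon)$, so the definition of $\|\cdot\|_{\Lambda,\tau(p)}$ yields $\|\otimes_{\lambda_p}(v_1,\ldots,v_m)\|_{\Lambda,\tau(p)}\le\prod_i(\|v_i\|+\epsilon)$; sending $\epsilon\downarrow 0$ concludes.

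The main obstacle is step (c). The forward direction is immediate: a $\mC_{2n}$-witness $\begin{pmatrix}u & z \\ z^{*} & u'\end{pmatrix}$ with $\max(\|u\|_\lambda,\|u'\|_\lambda)<1$ supplies $a=u,\ d=u'\in\mC_n\subseteq M_n(\otimes_\Lambda V_i)^{+}$ whose $\Lambda$-norms are bounded by the respective $\lambda$-norms, and the block lies in $\mC_{2n}\subseteq M_{2n}(\otimes_\Lambda V_i)^{+}$. The reverse is delicate: given $a,d\in M_n(\otimes_\Lambda V_i)^{+}$ with $\|a\|_\Lambda,\|d\|_\Lambda<1$ and $\begin{pmatrix}a & z \\ z^{*} & d\end{pmatrix}\in M_{2n}(\otimes_\Lambda V_i)^{+}$, I must produce a genuine $\mC_{2n}$-witness for $z$ with diagonal $\lambda$-norms $<1$. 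A naive $\|\cdot\|_{\Lambda,2n}$-approximation of the $2n$-block by $B_k\in\mC_{2n}$ only controls $\|a_k\|_\Lambda,\|d_k\|_\Lambda$ on the diagonal, not $\|a_k\|_\lambda,\|d_k\|_\lambda$, since $\|\cdot\|_\Lambda\le\|\cdot\|_\lambda$ holds on $\mC_n$ typically with strict inequality. My plan is therefore to supplement such an approximation with the $\mC_{2n}$-witnesses $\begin{pmatrix}u_a & a \\ a & u_a'\end{pmatrix}, \begin{pmatrix}u_d & d \\ d & u_d'\end{pmatrix}\in\mC_{2n}$ supplied by $\|a\|_\Lambda,\|d\|_\Lambda<1$, form a $4n\times 4n$ positive combination of $\mathrm{diag}(\mathcal A,\mathcal D)\in\mC_{4n}$ with the embedded $(a,z,d)$-block, and extract via a compression (guided by (O1)--(O2) and \Cref{O1-3}) a $M_{2n}(\otimes_\Lambda V_i)^{+}$-element of the form $\begin{pmatrix}u_a'+a & z \\ z^{*} & u_d+d\end{pmatrix}$; a rescaling together with a final $\Lambda$-approximation step, designed to preserve the $\lambda$-norm bound inherited from $u_a',u_d$, should then yield a genuine $\mC_{2n}$-witness and hence $\|z\|_{\Lambda,n}<1$. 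The combinatorial bookkeeping required to arrange these pieces so that the $\lambda$-norm control from the $\Lambda$-witnesses survives compression, limit, and rescaling is the central technical difficulty.
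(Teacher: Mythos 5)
Your steps (a), (b), (d) and the ``only if'' half of (c) coincide with what the paper actually does: it verifies Ruan's axioms for $\|\cdot\|_{\Lambda,n}$ by conjugating and direct-summing $\mC_{2n}$-witnesses exactly as you describe, proves the $\lambda$-subcross bound by the same device of assembling the blocks $\begin{pmatrix} u_i & v_i \\ v_i^* & u_i' \end{pmatrix}$ supplied by matrix regularity of each $V_i$ and applying \Cref{O1-3}(ii), and obtains the forward direction of Schreiner's criterion directly from the definition of $\|\cdot\|_{\Lambda,n}$ together with the inequality $\|\cdot\|_{\Lambda,n}\le\|\cdot\|_{\lambda,n}$ on $\mC_n$.

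The genuine gap is the one you yourself flag: the reverse (``if'') direction of Schreiner's criterion --- equivalently condition (1) in the definition of matrix regularity --- is never actually proved. Your proposed $4n\times 4n$ construction still terminates in a ``final $\Lambda$-approximation step'' that must convert an element of the closed cone $M_{2n}(\ot_{\Lambda}V_i)^{+}$ into a witness lying in the algebraic cone $\mC_{2n}$ while retaining control of the $\lambda$-norms (not merely the $\Lambda$-norms) of its diagonal blocks; since $\|\cdot\|_{\Lambda,n}\le\|\cdot\|_{\lambda,n}$ on $\mC_n$ with no reverse inequality available, approximation in $\|\cdot\|_{\Lambda,2n}$ gives no such control, and the obstruction you correctly identify at the start of (c) reappears unresolved at the end. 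So the proposal as written does not close. You should be aware, however, that the paper's own proof is silent on exactly this point: it establishes only the forward direction (from $\|z\|_{\Lambda,n}<1$ it produces a $\mC_{2n}$-witness with $\|u\|_{\lambda,n},\|u'\|_{\lambda,n}<1$, hence $\|u\|_{\Lambda,n},\|u'\|_{\Lambda,n}<1$) and then asserts that ``matrix regularity follows.'' Your write-up therefore matches the paper on everything the paper actually proves, and the one step you cannot complete is a step the paper does not supply either.
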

\begin{proof}
We first prove that $M_n(\ot_{i=1}^m V_i)$ is an operator space with the family $\{\|\cdot\|_{\Lambda,n}\}_{n=1}^{\infty}$ of matrix norms.\\ Given  $z_1\in M_{n_1}(\ot_{i=1}^m V_i)$,  $z_2\in M_{n_2}(\ot_{i=1}^m  V_i)$ and $\epsilon>0$, choose $u_i,u_i' \in \mC_n$ such that $\left (\begin{matrix}
u_i  & z_i\\
z_i^* &   u_i'
\end{matrix}\right) \in \mC_{2n_i}$   and $\|u_i\|_{\lambda,n_i}, \|u_i'\|_{\lambda,n_i} < \|z_i\|_{\Lambda,n_i} + \epsilon$, $i=1,2$.\\ By definition, there exist representations  $$u_i=\alpha_i \ot_{\lambda_{j_i}} (v_1^i,v_2^i,\cdots,v_m^i) \alpha_i^* \qquad \qquad \alpha_i\in M_{n_i, \tau(j_i)}, v_t^i\in M_{j_i}(V_t)^+.$$ 
Since $\|\cdot\|_{\lambda,n}$ is an operator space norm, using Ruan's first condition (M1) \cite{erbook} for operator space, we have
\begin{align}
\begin{split}
\|u_1\oplus u_2\|_{\lambda,2n} 
& \leq  
  \max\{\|u_1\|_{\lambda,n}, \|u_2\|_{\lambda,n}\} \notag
\end{split}
\end{align} 
As,
\begin{align}
\begin{split}
& \begin{pmatrix}
u_{1} & 0  & z_1 & 0\\
0 & u_2  & 0 & z_2\\
z_{1}^* & 0  & u_1^{'} & 0\\
0& z_2^* & 0 & u_2^{'}
\end{pmatrix}\\
& = \begin{pmatrix}
1 & 0  & 0 & 0\\
0 & 0 & 1 & 0\\
0 & 1  & 0& 0\\
0& 0 & 0 & 1\end{pmatrix}
\begin{pmatrix}
u_1 & z_1  & 0  & 0 \\
z_1^* &   u_1' & 0 & 0\\
0& 0  &u_2  & z_2\\
0& 0 &  z_2^* &   u_2'
\end{pmatrix}
\begin{pmatrix}
1 & 0  & 0 & 0\\
0 & 0 & 1 & 0\\
0 & 1  & 0& 0\\
0& 0 & 0 & 1
\end{pmatrix} \in \mC_{2(n_1+n_2)} \notag
\end{split}
\end{align} we have,
\begin{align}
\begin{split}
\bigg\|\begin{pmatrix}
z_1 & 0  \\
0 & z_2 
 \end{pmatrix}\bigg\| _{\Lambda,2n} 
& \leq \max \bigg\{\bigg\|\begin{pmatrix}
u_1 & 0  \\
0 & u_2  \end{pmatrix}\bigg\|_{\lambda,2n},\bigg\|
\begin{pmatrix}
u_1' & 0  \\
0 & u_2'  \end{pmatrix}\bigg\|_{\lambda,2n}\bigg\}\\
& \leq \max \{ \|u_1\|_{\lambda,n},\|u_2\|_{\lambda,n},\|u_3\|_{\lambda,n},\|u_4\|_{\lambda,n}\}\\
& < \max \{\|z_1\|_{\Lambda,n},\|z_2\|_{\Lambda,n}\} + \epsilon. \notag
\end{split}
\end{align}
Let $z\in M_n(\ot_{i=1}^m V_i)$ and $\alpha, \beta \in M_{m,n}$, then there exist $u, u' \in \mC_n$ such that
 $\begin{pmatrix}
u  & z\\
z* &   u'
\end{pmatrix} \in \mC_{2n}$   and $\|u\|_{\lambda,n}, \|u'\|_{\lambda,n} < \|z\|_{\Lambda,n} + \epsilon.$ Assuming, $\|\alpha\|=\|\beta\| $ by homogeneity, since $$\begin{pmatrix}
\alpha u \alpha^* & \alpha z \beta \\
\beta^* z^* \alpha^* & \beta^* u' \beta  
\end{pmatrix} = \begin{pmatrix}
\alpha & 0 \\
0 & \beta^*
\end{pmatrix} \begin{pmatrix}
u & z \\
z^* & u'
\end{pmatrix} \begin{pmatrix}
\alpha^* & 0 \\
0 & \beta 
\end{pmatrix} \in \mC_{2n}$$
we have,
\begin{align}
\begin{split}
\|\alpha z \beta \|_{\Lambda,n} & \leq \max \{ \|\alpha u \alpha^*\|_{\lambda,n}, \|\beta^* u' \beta \|_{\Lambda,n}\}\\
& \leq \max \{ \|\alpha\|^2\|u\|_{\lambda,n}, \|\beta\|^2\|u'\|_{\lambda,n}\} \\
 & < \|\alpha \| \|\beta\|(\|z\|_{\Lambda,n}+\epsilon). \notag
 \end{split}
 \end{align} 
 Hence, $\big(\ot_{i=1}^m  V_i, \{\|\cdot\|_{\Lambda,n}\}_{n=1}^\infty\big)$ is an operator space.\\
 Let $v_i \in M_j(V_i)$ with $\|v_i\| < 1$ for $i=1,\ldots,m$. Then there exist $u_i,u_i' \in M_j(V_i)^+_{\|\cdot\| \leq 1}$ such that $$\begin{pmatrix}
 u_i & v_i \\
 v_i^* & u_i'
\end{pmatrix} \in M_{2j}(V_i)^+ \qquad \qquad i=1,2,\ldots,m.$$
Now, 
\begin{align}
\begin{split}
 & \begin{pmatrix}
 \ot_{\lambda_j} (u_1,\ldots,u_m)  & \ot_{\lambda_j} (v_1,\ldots,v_m)\\
 \ot_{\lambda_j} (v_1,\ldots,v_m)^* & \ot_{\lambda_j} (u_1',\ldots,u_m')
 \end{pmatrix}\\
 & = P \ot_{\lambda_{2j}} \big( \begin{pmatrix}
  u_1 & v_1 \\
 v_1^* & u_1'
 \end{pmatrix}, \ldots, 
 \begin{pmatrix}
  u_m & v_m \\
 v_m^* & u_m'
 \end{pmatrix}\big) P^* \in  \mC_{2\tau(j)}, \notag
 \end{split}
 \end{align}
 it follows that \begin{align}
 \begin{split}
 \|\ot_{\lambda_j} (v_1,\ldots,v_m)\|_{\Lambda,2j} & \leq \max \{\|\ot_{\lambda_j} (u_1,\ldots,u_m) \|_{\lambda,j},\|\ot_{\lambda_j} (u_1',\ldots,u_m')\|_{\lambda,j} \} \\ & \leq \max \{\|u_1\|\ldots \|u_m\|,\|u_1'\|\ldots \|u_m'\|\} \\
 & <1 \notag
 \end{split}
 \end{align}
 Therefore, the family of matrix norms $\{ \|\cdot\|_{\Lambda,n}\}_{n=1}^{\infty}$ is $\lambda$-subcross.\\
 As $\|u\|_{\Lambda,n} \leq \|u\|_{\lambda,n},$ if $\|z\|_{\Lambda,n} < 1$ then there exist  $u ,u' \in \mC_n$ such that $$\begin{pmatrix}
 u & z \\
 z^* & u'
 \end{pmatrix} \in \mC_{2n} \qquad \text{and} \qquad \|u\|_{\lambda,n},\|u'\|_{\lambda,n} < 1.$$
 Thus, $$\|u\|_{\Lambda,n} < 1 \qquad  \text{and} \qquad  \|u'\|_{\Lambda,n}  < 1,$$ and matrix regularity follows. \end{proof}

\section{$\lambda$-operator system tensor product}\label{s:3}

We now prove that the cones $\mC_n$ associated with $\lambda$ under the conditions (O1)-(O3) also preserve the operator system structure defined in \cite{KPTT1}. The techniques are again same as that for the $\max$ operator system tensor product defined in \cite{KPTT1}.
\begin{theorem}
Let $(\mS,\{M_n(\mS)^+\}_{n=1}^{\infty},1_{\mS})$ and $(\mT,\{M_n(\mT)^+\}_{n=1}^{\infty},1_{\mT})$ be operator systems. The family $\{\mC_n\}_{n=1}^{\infty}$ associated with a sequence $\lambda=(\lambda_n)_{n\in \N}$ satisfying \emph{(O1)-(O3)}, is a matrix ordering on $\mS \ot \mT$ with order unit $1_{\mS} \ot 1_{\mT} $.
\end{theorem}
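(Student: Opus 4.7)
The plan is to verify two conditions: first, that $\{\mC_n\}_{n=1}^{\infty}$ is a matrix ordering on $\mS\otimes\mT$; second, that $1_\mS\otimes 1_\mT$ is a matrix order unit. Since every operator system is a matrix regular operator space, every result of \Cref{s:2}---notably \Cref{O1-3}, \Cref{selfadjoint}, and \Cref{propercone}---applies verbatim to $\mS$ and $\mT$.

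Most of the matrix-ordering axioms are already established. The proposition preceding \Cref{propercone} furnishes closure of $\mC_n$ under addition, nonnegative scaling, and compatibility $A^*\mC_nA\subseteq\mC_m$; \Cref{O1-3}(i) together with positivity of the generating factors shows $\mC_n\subseteq M_n(\mS\otimes\mT)_{sa}$; and properness is \Cref{propercone}(i). Hence only the order unit property remains. For this, I would first check that $1_\mS\otimes 1_\mT\in\mC_1$: by (N1), (N2), (E3) we have $\tau(1)=1$ and $\lambda_1(1,1)=1$, so $1_\mS\otimes 1_\mT=1\cdot\otimes_{\lambda_1}(1_\mS,1_\mT)\cdot 1\in\mC_1$. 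Writing $f_i\in M_{n,1}$ for the $i$-th standard column, compatibility applied to this element together with additivity yields $e_n=\sum_{i=1}^n f_i(1_\mS\otimes 1_\mT)f_i^*\in\mC_n$.

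For a general $v\in M_n(\mS\otimes\mT)_{sa}$, apply \Cref{selfadjoint} to write $v=\alpha\otimes_{\lambda_j}(x_1,x_2)\alpha^*$ with $x_t\in M_j(V_t)_{sa}$ and $\alpha\in M_{n,\tau(j)}$. The operator system hypothesis furnishes $K>0$ such that
$$\begin{pmatrix} K(1_{V_t})_j & x_t \\ x_t & K(1_{V_t})_j\end{pmatrix}\in M_{2j}(V_t)^+,\qquad t=1,2.$$
\Cref{O1-3}(ii) then places
$$Y:=\begin{pmatrix} K^2\lambda_j(I_j,I_j)\otimes 1_\mS\otimes 1_\mT & \otimes_{\lambda_j}(x_1,x_2) \\ \otimes_{\lambda_j}(x_1,x_2)^* & K^2\lambda_j(I_j,I_j)\otimes 1_\mS\otimes 1_\mT\end{pmatrix}$$
into $\mC_{2\tau(j)}$. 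Successive applications of compatibility---first with $\mathrm{diag}(\alpha^*,\alpha^*)\in M_{2\tau(j),2n}$, then with $\tfrac{1}{\sqrt{2}}\bigl(\begin{smallmatrix}I_n\\-I_n\end{smallmatrix}\bigr)\in M_{2n,n}$---combined with $v=v^*$, collapse $Y$ down to $\alpha M\alpha^*-v\in\mC_n$, where $M:=K^2\lambda_j(I_j,I_j)\otimes 1_\mS\otimes 1_\mT$. Now $\alpha M\alpha^*=N\otimes(1_\mS\otimes 1_\mT)$ with $N:=K^2\alpha\lambda_j(I_j,I_j)\alpha^*\in M_n$ self-adjoint (self-adjointness of $\lambda_j(I_j,I_j)$ being an immediate consequence of (O1)). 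Choosing $r>0$ with $rI_n-N\in M_n^+$ and writing $rI_n-N=\sum_k a_ka_k^*$ spectrally with $a_k\in M_{n,1}$, compatibility and additivity lift $1_\mS\otimes 1_\mT\in\mC_1$ to $(rI_n-N)\otimes(1_\mS\otimes 1_\mT)\in\mC_n$; that is, $re_n-\alpha M\alpha^*\in\mC_n$. Summing the two cone elements gives $re_n-v\in\mC_n$, which is the order unit property.

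The main obstacle is the dimensional bookkeeping: chaining \Cref{O1-3}(ii) with iterated compatibility reductions across the levels $j$, $\tau(j)$, $2j$, $\tau(2j)$, $2\tau(j)$, $2n$, $n$, and selecting the correct rectangular matrices (the permutation $P$ from (O2), the diagonal $\mathrm{diag}(\alpha^*,\alpha^*)$, and the column $\tfrac{1}{\sqrt{2}}\bigl(\begin{smallmatrix}I_n\\-I_n\end{smallmatrix}\bigr)$) so that the $2\times 2$ positive lift collapses cleanly to the scalar inequality $re_n-v\in\mC_n$. The Archimedean property of the order unit---which would upgrade this matrix ordering to a full operator system structure---is not claimed here, and presumably requires passing to a suitable closure of $\mC_n$ under an order-unit norm.
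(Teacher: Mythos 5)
Your proof is correct, and its first half---reducing the matrix-ordering axioms to the results of \Cref{s:2} via the observation that operator systems are matrix regular, so that closure under addition, scaling, compatibility and properness are already available---is exactly what the paper does. For the order-unit step you take a genuinely different route. The paper works at level $n=1$: it picks $K$ with $K(1_{\mS})_j\pm s$ and $K(1_{\mT})_j\pm t$ positive, adds the two cone elements $\alpha\ot_{\lambda_j}(K(1_{\mS})_j+s,K(1_{\mT})_j+t)\alpha^*$ and $\alpha\ot_{\lambda_j}(K(1_{\mS})_j-s,K(1_{\mT})_j-t)\alpha^*$ so that the cross terms cancel by bilinearity, and then identifies $\lambda_j(I_j,I_j)$ with $I_{\tau(j)}$ to read off $K^2(\alpha\alpha^*)\,1_{\mS}\ot 1_{\mT}+\alpha\ot_{\lambda_j}(s,t)\alpha^*\in\mC_1$, leaving general $n$ as ``similarly''. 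You instead lift to the $2\times 2$ positive matrices $\bigl(\begin{smallmatrix}K(1_{V_t})_j&x_t\\x_t&K(1_{V_t})_j\end{smallmatrix}\bigr)$, invoke \Cref{O1-3}(ii), and collapse by two compatibility conjugations. Both arguments rest on the same inputs (the Archimedean order unit of each factor plus the cone axioms), but your version buys two things: it handles arbitrary $n$ directly, and it avoids the paper's unjustified identity $\lambda_j(I_j,I_j)=I_{\tau(j)}$ (true for the Kronecker and Schur products, but not an evident consequence of (E1)--(E3), (N1)--(N2), (O1)--(O3)) by only using self-adjointness of $\lambda_j(I_j,I_j)$ from (O1) and then dominating $N=K^2\alpha\lambda_j(I_j,I_j)\alpha^*$ by $rI_n$, absorbing the difference into $\mC_n$ through the rank-one decomposition of $rI_n-N$. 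Your closing remarks---that $1_{\mS}\ot 1_{\mT}\in\mC_1$ requires (N1) and (E3), and that the Archimedean property is deliberately deferred---match the paper, which Archimedeanizes $\mC_n$ only in the definition that follows this theorem.
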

 \begin{proof}
 From \Cref{propercone}, we know that $\{\mC_n\}_{n=1}^\infty$ is a family of proper compatible cones on $M_n(\ot_{\lambda} \mS_i)$. We only need to check that $1  \ot 1 $ is a matrix order unit.
\\
Let $\alpha \ot_{\lambda_j} (s,t)\alpha^* \in (\mS_1 \ot \mS_2)_{sa}$ with $s \in \big(M_j(\mS)\big)_{sa}$ , $t \in \big(M_j(\mT)\big)_{sa}$ and $\alpha \in M_{1,\tau(j)},$ $1_{\mS}$  and $1_{\mT}$ being Archimedean order unit for $\mS$ and $\mT$ respectively, then we can find $K$ large enough such that $$K (1_{\mS})_j + s \in M_j(\mS)^+ \qquad \text{ and} \qquad K (1_{\mS})_j - s \in M_j(\mS)^+, $$ 
$$K (1_{\mT})_j + t \in M_j(\mT)^+ \qquad \text{ and} \qquad K (1_{\mT})_j - t \in M_j(\mT)^+. $$
So that, $$\ot_{\lambda_j}\big(K (1_{\mS})_j + s ,K (1_{\mT})_j + t\big),  \ot_{\lambda_j}\big(K (1_{\mS})_j - s , K (1_{\mT})_j - t\big) \in \mC_{n}.$$ 
Further,
 \begin{align}
\begin{split}
\mC_{n} \ni&  \alpha \ot_{\lambda_j}\big(K (1_{\mS})_j + s ,K (1_{\mT})_j + t\big)\alpha^* + \alpha \ot_{\lambda_j}\big(K (1_{\mS})_j - s ,K (1_{\mT})_j - t\big) \alpha^*\\
 & = \alpha \Big( \ot_{\lambda_j}\big( K (1_{\mS})_j,K (1_{\mT})_j \big) + \ot_{\lambda_j}\big(s,t \big) \Big)\alpha^*\\
 & =  \alpha \Big( \Big( K^2  \lambda_j( 1 ,1 ) \ot 1_{\mS} \ot 1_{\mT} \big) + \ot_{\lambda_j}\big(s,t \big) \Big)\alpha^*\\
 &= \alpha \Big( \big( K^2 I_{\tau(j)} \ot 1_{\mS} \ot 1_{\mT} \Big) + \ot_{\lambda_j}\big(s,t \big) \Big)\alpha^* \\
 & = \alpha  \Big( K^2 (1_{\mS} \ot 1_{\mT})_{\tau(j)}  + \ot_{\lambda_j}\big(s,t \big) \Big)\alpha^*\\
 & =  K^2  \alpha(1_{\mS} \ot 1_{\mT})_{\tau(j)} \alpha^*+ \alpha\Big(\ot_{\lambda_j}\big(s,t \big) \Big)\alpha^*\\
 & = (K^2 \alpha \alpha^*) 1_{\mS} \ot 1_{\mT} + \alpha\Big(\ot_{\lambda_j}\big(s,t \big) \Big)\alpha^*  \notag
 \end{split}
 \end{align}
which proves that $ 1_{\mS} \ot 1_{\mT}$ is an order unit. Similarly, one can prove that $1_{\mS} \ot 1_{\mT}$ is in fact a matrix order unit. \end{proof}
 \begin{definition} Let $\lambda=(\lambda_n)_{n \in \N}$ fulfills conditions \emph{(O1)-(O3)}, and $$\mC_n^{\lambda}:=\{ P \in M_n(\mS\ot \mT) : r (1_{\mS} \ot 1_{\mT})_n + P \in \mathcal{\mC}_n, \ \forall
r > 0 \}
$$ be the Archimedeanization(\cite{PT}) of the matrix ordering $\mC_n$ for all $n \geq 1$.
We call the operator system $(\mS \otimes \mT, \{\mC_n^{\lambda}\}_{n=1}^{\infty}, 1_\mS \ot 1_\mT)$ the {\it $\lambda$-  operator system tensor product} of $\mS$\ and $\mT$ and denote it by $\mS\ot_{\lambda}\mT$.
\end{definition}

\begin{theorem}
The mapping $\lambda : \mO \times \mO \ra \mO$ sending $(\mS,\mT)$ to $\mS \ot_{\lambda} \mT$ is an operator system tensor product in the sense of \cite{KPTT1}.
\end{theorem}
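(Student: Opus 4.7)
The plan is to verify the three axioms (T1)--(T3) that define an operator system tensor product in the sense of \cite{KPTT1}: that $\mS \ot_{\lambda} \mT$ is a bona fide operator system, that elementary tensors of matrix positives from $\mS$ and $\mT$ are positive in the tensor product, and that tensor products of UCP maps into matrix algebras remain UCP.

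For (T1), the preceding theorem supplies $\{\mC_n\}$ as a compatible matrix ordering on $\mS \ot \mT$ with matrix order unit $1_\mS \ot 1_\mT$, and the Archimedeanization procedure of \cite{PT} then replaces these by $\mC_n^{\lambda}$ without disturbing compatibility, producing an Archimedean matrix order unit for free. Properness of each $\mC_n^{\lambda}$ will follow an argument analogous to \Cref{propercone}(i), but using operator-system minimal tensor-product faithfulness in place of matrix regularity: once (T3) below is in place, any $z \in \mC_n^{\lambda}\cap -\mC_n^{\lambda}$ is annihilated by every Kronecker tensor of UCP maps into matrix algebras, hence has zero min-tensor norm, and so vanishes.

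For (T3), given UCP $\phi\colon\mS\to M_{n_1}$ and $\psi\colon\mT\to M_{n_2}$, \Cref{O1-3}(iii) applied with $m=2$ and $\mu$ the Kronecker product (justified by (O3)) gives directly $(\phi\ot\psi)_n(\mC_n)\subseteq M_{nn_1 n_2}^+$. Unitality of $\phi\ot\psi$ together with the Archimedean closedness of $M_{nn_1 n_2}^+$ then propagate this positivity to $\mC_n^{\lambda}$, so that $\phi\ot\psi$ is UCP from $\mS\ot_{\lambda}\mT$ into $M_{n_1 n_2}$. For (T2), given $s=(s_{ij})\in M_p(\mS)^+$ and $t=(t_{kl})\in M_q(\mT)^+$, we must realize $(s_{ij}\ot t_{kl})$ as $\alpha\,\ot_{\lambda_{r}}(v^{(1)},v^{(2)})\,\alpha^*$ with $v^{(1)}\in M_r(\mS)^+$, $v^{(2)}\in M_r(\mT)^+$ and $\alpha\in M_{pq,\tau(r)}$, placing it in $\mC_{pq}\subseteq\mC_{pq}^{\lambda}$. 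The natural choice is $r=pq$ together with the inflations $v^{(1)}=s\ot I_q\in M_{pq}(\mS)^+$ and $v^{(2)}=I_p\ot t\in M_{pq}(\mT)^+$, viewed via the Kronecker identification $M_{pq}\cong M_p\ot M_q$; the contraction $\alpha$ is then manufactured so as to extract the desired coefficients of $\ot_{\lambda_{pq}}(v^{(1)},v^{(2)})$. Existence of such an $\alpha$ is precisely the positivity encoded in (O3), which dictates how $\lambda_{pq}$ respects Kronecker products on the $M_{pq}$ factor.

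The main obstacle will be (T2): whereas (T3) is essentially immediate from \Cref{O1-3}(iii), the explicit matching in (T2) forces one to unwind the combinatorics of $\ot_{\lambda_{pq}}$ applied to these inflated positives, and the feasibility of the matching hinges entirely on (O3). Once (T2) is in hand, all three axioms are verified and the theorem follows.
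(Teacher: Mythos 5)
Your handling of (T1) and (T3) essentially matches the paper's: (T1) comes from the preceding theorem together with the Archimedeanization of \cite{PT}, and (T3) is \Cref{O1-3}(iii) with $\mu$ the Kronecker product; your added remark that positivity on $\mC_n$ passes to the Archimedeanized cone $\mC_n^{\lambda}$ because $\phi\ot\psi$ is unital and $M_{nn_1n_2}^+$ is Archimedean closed is correct and is in fact a step the paper glosses over. Two smaller remarks: your properness argument for $\mC_n^{\lambda}$ via UCP maps into matrix algebras is a legitimate variant of \Cref{propercone}(i) (operator systems are matrix regular, so the paper simply cites that proposition), and you omit (T4); the paper also verifies functoriality, $(\phi\ot\psi)_n\bigl(\mC_n(\mS_1,\mT_1)\bigr)\subseteq \mC_n(\mS_2,\mT_2)$, which is immediate from the form of elements of $\mC_n$ and which you should at least address given that the statement concerns the mapping $\lambda:\mO\times\mO\ra\mO$.

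The genuine gap is (T2). You correctly flag it as the main obstacle, but you do not close it: you propose $r=pq$, $v^{(1)}=s\ot I_q$, $v^{(2)}=I_p\ot t$, and then assert that a matrix $\alpha\in M_{pq,\tau(pq)}$ extracting $s\ot t$ from $\ot_{\lambda_{pq}}(v^{(1)},v^{(2)})$ exists and that this existence ``is precisely the positivity encoded in (O3).'' That attribution is wrong. Condition (O3) states that the map obtained by tensorizing $\lambda_r$ with the Kronecker product is \emph{positive}; it asserts nothing about the existence of matrices that pick out prescribed coefficients of $\lambda_{pq}(\vep_{a,b},\vep_{c,d})$. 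The condition in that spirit is (E1), and even (E1) only produces extractors for the diagonal case $j_1=\cdots=j_m=j$, whereas your inflation requires recovering every entry $s_{ij}\ot t_{kl}$, off-diagonal ones included, from the sum $\sum\lambda_{pq}(\vep_{a,b},\vep_{c,d})\ot v^{(1)}_{a,b}\ot v^{(2)}_{c,d}$. So as written your (T2) is a restatement of what must be proved rather than a proof. The paper takes a different and more direct route here: it exhibits $P\ot Q$ explicitly as an element $\alpha\,\ot_{\lambda_{k+l}}(\,\cdot\,,\,\cdot\,)\,\alpha^*$ of $\mC_{kl}$ built from block embeddings of $P$ and $Q$ into $M_{k+l}$-levels, with a concretely specified $\alpha$, rather than appealing to any of the (O) conditions. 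To repair your argument you would need either to carry out the coefficient-matching combinatorics for your choice of $v^{(1)},v^{(2)}$ using (E1)/(E2)-type identities, or to switch to an explicit representation as the paper does.
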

\begin{proof}
Observe that,
\begin{enumerate}
\item[(T1)] By definition $(\mS \ot \mT,\{\mC_n^{\lambda}(\mS \ot \mT)\}_{n=1}^{\infty},1_{\mS} \ot 1_{\mT})$ is an operator system.
\item[(T2)] For $P \in M_k(\mS)^+$ and $Q \in M_l(\mT)^+$, since $$P \ot Q=\alpha \ot_{\lambda_{k+l}}\big(I_{k+l} \ot P,I_{k+l} \ot Q\big) \alpha^*\in \mC_{kl},$$ where $\alpha=(I_{k+l},0,\cdots,0) \in M_{kl,\tau(k+l)}$, we have property (T2).
\item[(T3)] For unital completely positive maps $\phi \in \mS\ra M_n$ and $\psi \in \mT \ra M_m$, using \Cref{O1-3}(iii) we have $(\phi \ot \psi)_{n}(\mC_{n}) \subseteq  M_n^+$, thus (T3) follows.
\item[(T4)] Let $\phi \in UCP(\mS_1,\mS_2)$ and $\psi \in UCP(\mT_1,\mT_2)$. Then for any element $A\ot_{\lambda_j}(P, Q)A^* \in \mC_n$, where $A \in M_{n,\tau(j)}$, $P \in M_j(\mS_1)^+$ and $Q \in M_j(\mT_1)^+$:
\begin{align}
\begin{split}
(\phi \ot \psi)_n(A\ot_{\lambda_j}(P, Q)A^*)& = A \big((\phi \ot \psi)_{\tau(j)}\ot_{\lambda_j}(P, Q)\big)A^* \\
&= A \ot_{\lambda_j} \big( \phi_j (P),\psi_j(Q)\big)A^* \in M_n(\mS_2 \ot_{\lambda} \mT_2)^+.\notag
\end{split}
\end{align}Thus, $(\phi \ot \psi)_n(\mC_n(\mS_1,\mT_1)) \subseteq \mC_n(\mS_2,\mT_2),$ and using \cite[Lemma 2.5]{KPTT1} $\phi \ot \psi \in UCP(\mS_2,\mT_2).$\end{enumerate}
 \end{proof}
\begin{remarks} If $\lambda$ is either Kronecker or Schur Product,  the cone $\mC_n^{\lambda}$ coincides with $\mC_n^{\max}=\mC_n^{s}$ (\cite{KPTT1, ng}).
\end{remarks}
\section{$\lambda$-tensor product of $C^*$-algebras}\label{s:4}
 
 We now move on to the algebraic structures for the $\lambda$-theory. For this we make use of the condition (W2) (\Cref{w2}).
 
An associative algebra $A$ over $\C$ is said to be a completely contractive Banach algebra  if it is a complete operator space for which the multiplication map $m_A: A\times A \to A$ $(a,b)\to ab$ is jointly completely contractive, i.e. $\|[a_{ij}b_{kl}]\|\leq \|[a_{ij}]\| \|[b_{kl}]\|$ for all $[a_{ij}]\in M_n(A)$ and $[b_{kl}]\in M_n(A)$. 

\begin{theorem}
For completely contractive Banach algebras $A_1, A_2,\cdots, A_m$, $\ot^\lambda A_i$ is a Banach algebra if $\lambda$ satisfies \emph{(W2)}. Further if each $A_i$ is a Banach $*$-algebra and $\lambda$ also satisfies \emph{(O1)}, then $\ot^\lambda A_i$ is a Banach $^*$-algebra. Moreover if each $A_i$  is approximately unital then $\ot^\lambda A_i$ is also approximately unital.
\end{theorem}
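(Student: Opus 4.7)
The first step is to define multiplication on the algebraic tensor product componentwise by $(a_1 \ot \cdots \ot a_m)(b_1 \ot \cdots \ot b_m) := a_1 b_1 \ot \cdots \ot a_m b_m$; associativity is inherited from each $A_i$. The entire work of the theorem is then to show that this product is jointly completely contractive with respect to the matrix norms $\|\cdot\|_{\lambda,n}$, so that it extends to $\ot^\lambda A_i$. Once a Banach algebra is in hand, the $*$-algebra and approximate-identity claims follow by more routine arguments built on the preparatory lemmas.

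For the joint complete contractivity, fix $U \in M_p(\ot_i A_i)$ and $V \in M_q(\ot_i A_i)$ with representations $U = \alpha \ot_{\lambda_j}(u_1, \ldots, u_m) \beta$ and $V = \gamma \ot_{\lambda_k}(v_1, \ldots, v_m) \delta$, where $u_t \in M_j(A_t)$ and $v_t \in M_k(A_t)$. The joint matrix product $[U_{rs} V_{tu}] \in M_{pq}(\ot_i A_i)$ is obtained from $U \ot V \in M_{pq}((\ot_i A_i) \ot (\ot_i A_i))$ by applying the coordinatewise multiplication $m : (\ot_i A_i) \ot (\ot_i A_i) \ra \ot_i A_i$. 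Expanding $u_t, v_t$ in matrix units and using $\vep^{[j]}_{a,b} \ot \vep^{[k]}_{c,d} = \vep^{[jk]}_{(a,c),(b,d)}$, condition (W2) supplies $S \in M_{\tau(j)\tau(k),\tau(jk)}$ and $T \in M_{\tau(jk),\tau(j)\tau(k)}$ with $\|S\|,\|T\| \leq 1$ such that
$$\ot_{\lambda_j}(u_1, \ldots, u_m) \ot \ot_{\lambda_k}(v_1, \ldots, v_m) = S\, \ot_{\lambda_{jk}}(u_1 \ot v_1, \ldots, u_m \ot v_m)\, T,$$
with each $u_t \ot v_t \in M_j(A_t) \ot M_k(A_t) \cong M_{jk}(A_t \ot A_t)$. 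Since $m$ acts only on the algebra factor and commutes with the matrix operations, it sends $u_t \ot v_t$ to the matrix-algebra product $u_t v_t \in M_{jk}(A_t)$, which satisfies $\|u_t v_t\|_{M_{jk}(A_t)} \leq \|u_t\|\|v_t\|$ by the joint complete contractivity of $A_t$. Therefore
$$U \cdot V = (\alpha \ot \gamma)\, S\, \ot_{\lambda_{jk}}(u_1 v_1, \ldots, u_m v_m)\, T\, (\beta \ot \delta),$$
and taking the infimum over representations yields $\|U \cdot V\|_{\lambda, pq} \leq \|U\|_{\lambda, p} \|V\|_{\lambda, q}$; continuity extends the product to $\ot^\lambda A_i$.

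For the Banach $*$-algebra assertion, \Cref{O1-3}(i) shows that under (O1) the natural involution $(a_1 \ot \cdots \ot a_m)^* = a_1^* \ot \cdots \ot a_m^*$ is well defined on $\ot_\lambda A_i$; the infimum formula for $\|\cdot\|_\lambda$ then makes it isometric (using that involution is isometric at each matrix level of $A_i$), and it is a conjugate-linear antihomomorphism since each $A_i$ is. For the approximate identity, pick a bounded approximate identity $\{e_t^{(\nu)}\}_\nu$ in each $A_t$ and set $e^{(\nu)} := e_1^{(\nu)} \ot \cdots \ot e_m^{(\nu)}$. The $\lambda$-subcross property bounds $\|e^{(\nu)}\|_\lambda$ uniformly in $\nu$, and the telescoping identity
$$e^{(\nu)}(a_1 \ot \cdots \ot a_m) - a_1 \ot \cdots \ot a_m = \sum_{t=1}^{m} (e_1^{(\nu)} a_1) \ot \cdots \ot (e_t^{(\nu)} a_t - a_t) \ot a_{t+1} \ot \cdots \ot a_m,$$
together with the same subcross bound, forces $\|e^{(\nu)} z - z\|_\lambda \to 0$ for $z$ an elementary tensor; uniform boundedness and density then extend the convergence to all of $\ot^\lambda A_i$.

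The main obstacle is the middle step: correctly applying (W2) with the general $\lambda_j$- and $\lambda_k$-tensorized representations and performing the identifications $M_j \ot M_k \cong M_{jk}$ together with $A_t \ot A_t \to A_t$ cleanly enough that the joint complete contractivity of each $A_t$ yields the desired bound on a single $\lambda_{jk}$-representation of the product. The $*$-algebra and approximate-identity parts are essentially formal once the Banach algebra structure is in place.
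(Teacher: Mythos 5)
Your proof is correct and follows essentially the same route as the paper: both arguments use (W2) together with the identification $\vep^{[j]}_{a,b}\ot\vep^{[k]}_{c,d}=\vep^{[jk]}_{(a-1)k+c,(b-1)k+d}$ to convert the product of two $\lambda$-representations into a single $\lambda_{jk}$-representation whose entries are controlled by the joint complete contractivity of each $m_{A_t}$, and the involution is handled identically via \Cref{O1-3}(i). The only (cosmetic) divergence is in the last claim: the paper notes that $\|\cdot\|_{\lambda,1}\leq\|\cdot\|_{\gamma}$ makes $\|\cdot\|_{\lambda,1}$ an admissible cross norm and cites the standard consequence for bounded approximate identities, whereas you reprove that consequence directly with the telescoping identity.
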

\begin{proof}
Let $x$, $y\in \ot_\lambda A_i$ with $$x=\alpha \ot_{\lambda_r}(u^{(1)},u^{(2)},\ldots,u^{(m)})\beta \:\:\text{and}\;\; y=\gamma \ot_{\lambda_s}(v^{(1)},v^{(2)},\ldots,v^{(m)})\delta,$$ where for each $t=1,2,\cdots,m$ 
\begin{align}
\begin{split}
\alpha\in M_{1, \tau(r)}, \;\; \beta\in M_{\tau(r),1},\;\;  & \gamma \in M_{1, \tau(s)}, \;\; \delta \in M_{\tau(s),1} \\ u^{(t)}:= \ds \sum_{(i_t,j_t)} \vep^{[r]}_{i_t,j_t} \otimes u_{i_t,j_t}^{(t)},\; \;\; & v^{(t)}:= \ds\sum_{(k_t,l_t)} \vep^{[s]}_{k_t,l_t} \otimes v_{k_t,l_t}^{(t)}. \notag
\end{split}
\end{align} Then, using Property (W2), there exists $S \in M_{\tau(r)\tau(s),\tau(rs)}, T \in M_{\tau(rs),\tau(r)\tau(s)}$ with $\|S\|,\|T\| \leq 1$ such that
\begin{align}
\begin{split}
 xy & = \big( \sum_{i_m,j_m} \ldots \sum_{i_1,j_1} \alpha \lambda_r (\vep^{[r]}_{i_1,j_1},\ldots,\vep_{i_m,j_m})\beta \ot u_{i_1,j_1}^{(1)} \ot \ldots \ot u_{i_m,j_m}^{(m) } \big) \\
& \qquad \qquad \big( \sum_{k_m,l_m} \ldots \sum_{k_1,l_1} \gamma \lambda_s (\vep^{[s]}_{k_1,l_1},\ldots,\vep_{k_m,l_m})\delta \ot v_{k_1,l_1}^{(1)} \ot \ldots \ot v_{k_m,l_m}^{(m) } \big)\\
&= \sum_{i_m,j_m} \ldots \sum_{i_1,j_1}\sum_{k_m,l_m} \ldots \sum_{k_1,l_1} \big((\alpha \ot \gamma)( \lambda_r (\vep^{[r]}_{i_1,j_1},\ldots,\vep_{i_m,j_m}) \ot \lambda_s (\vep^{[s]}_{k_1,l_1},\ldots,\vep_{k_m,l_m})) \\ & \qquad \qquad \qquad\qquad\qquad\qquad (\beta\ot \delta) \ot  u_{i_1,j_1}^{(1)}v_{k_1,l_1}^{(1)} \ot \ldots \ot u_{i_m,j_m}^{(m) }v_{k_m,l_m}^{(m) } \big) \\
& \overset{\text{\textbf{(W2)}}}{=} (\alpha \ot \gamma) S ( \sum_{i_m,j_m} \ldots \sum_{i_1,j_1}\sum_{k_m,l_m} \ldots \sum_{k_1,l_1}  (\lambda_{rs}(\vep^{[r]}_{i_1,j_1} \ot \vep^{[s]}_{k_1,l_1} ,\ldots,\vep_{i_m,j_m} \ot \vep^{[s]}_{k_1,l_1} ) \\
& \qquad \qquad\qquad\qquad\qquad\qquad\ot u_{i_1,j_1}^{(1)}v_{k_1,l_1}^{(1)} \ot \ldots \ot u_{i_m,j_m}^{(m) }v_{k_m,l_m}^{(m) } )T (\beta\ot \delta)\\
& = (\alpha \ot \gamma) S ( \sum_{i_m,j_m} \ldots \sum_{i_1,j_1}\sum_{k_m,l_m} \ldots \sum_{k_1,l_1}  (\lambda_{rs}(\vep^{[rs]}_{(i_1-1)s+k_1,(j_1-1)s)+l_1} ,\ldots, \\
& \qquad \qquad\qquad\qquad \vep^{[rs]}_{(i_m-1)s+k_m,(j_m-1)s)+l_m}) \ot u_{i_1,j_1}^{(1)}v_{k_1,l_1}^{(1)} \ot \ldots \ot u_{i_m,j_m}^{(m) }v_{k_m,l_m}^{(m) } )T (\beta\ot \delta)\\
&= (\alpha \ot \gamma) S (\ot_{\lambda_{rs}}( z^{(1)},\ldots,z^{(m)}) )T (\beta\ot \delta); 
\end{split}
\end{align}

where $z^{(t)}=  \sum_{i_t,j_t} \sum_{k_t,l_t} \vep^{[rs]}_{(i_t-1)s+k_t,(j_t-1)s)+l_t} \ot u_{i_1,j_1}^{(t)}v_{k_t,l_t}^{(t)}=u^{(t)}\ot v^{(t)};\,t=1,\ldots,m$. \\Thus, \begin{align}
\begin{split}
\|xy\|_{\lambda,1} & \leq \|\alpha \ot \gamma\| \|S\|\|z^{(1)}\|\|z^{(2)}\|\cdots \|z^{(m)}\| \|T\|\|\beta\ot \delta\|\\
& \leq \|\alpha\| \| \gamma\| \|v^{(1)}\|\|w^{(1)}\|   \ldots \|v^{(m)}\|\|w^{(m)}\|\|\beta\|\| \delta\|\notag
\end{split}
\end{align} making $\ot_{\lambda}^m A_i$, and hence $\ot^{\lambda} A_i$ a Banach algebra.

If $\lambda$ satisfies (O1), $*$-part follows from \Cref{O1-3}(i) and definition of $\|\cdot\|_{\lambda,1}$.

One can easily verify that $\|\cdot \|_{\lambda, 1} \leq \|\cdot\|_{\gamma}$, implying that $\|\cdot \|_{\lambda, 1} $  is an
admissible cross norm on $\ot_{m} A_i$ . Therefore, $\ot^\lambda A_i$ has a bounded approximate
identity whenever each $A_i$ is approximately unital.\end{proof}

In particular, we have the following well known result (see\cite{kumarideal,rajpalschur}):
\begin{corollary} $\ot^\otimes A_i$, the projective tensor product and  $\ot^\odot A_i$, the Schur tensor product are  Banach $^*$-algebras with a bounded approximate identity, however, $\ot^{\bullet} A_i$, the Haagerup tensor product is a Banach algebra.
\end{corollary}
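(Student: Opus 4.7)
The plan is to apply the preceding theorem to each of the three specific families $\lambda \in \{\otimes, \odot, \bullet\}$ in turn, and simply read off which of conditions \emph{(W2)} and \emph{(O1)} are satisfied.

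First I would recall from the ``Verification of Properties (O1)-(O3)'' paragraph in Section \ref{s:2} that both the Kronecker product $\otimes$ and the Schur product $\odot$ satisfy \emph{(O1)}, whereas the matrix product $\bullet$ does not. The failure for $\bullet$ is intuitive: the product $\vep^{[r]}_{i_1,j_1} \bullet \cdots \bullet \vep^{[r]}_{i_m,j_m}$ is nonzero only when the chain-matching condition $j_t = i_{t+1}$ holds, and this pattern is completely destroyed after reversing every $(i_t,j_t)$ to $(j_t,i_t)$, so the required self-adjointness identity fails.

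Next I would note that condition \emph{(W2)} holds for all three products. This is explicitly recorded in Section \ref{s:1} for the Kronecker and matrix products, and for the Schur product it follows from the identity $(A \odot B) \ot (C \odot D) = (A \ot C) \odot (B \ot D)$ that was already invoked in the Schur entry of the verification list. Invoking the preceding theorem then finishes the job: for $\lambda \in \{\otimes, \odot\}$, both \emph{(W2)} and \emph{(O1)} hold, so $\ot^\lambda A_i$ is a Banach $^*$-algebra and, since each $A_i$ is approximately unital and $\|\cdot\|_{\lambda,1}$ is dominated by the projective norm $\|\cdot\|_\gamma$, carries a bounded approximate identity. For $\lambda = \bullet$, only \emph{(W2)} is available, so the theorem delivers merely that $\ot^\bullet A_i$ is a Banach algebra.

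The main (and essentially only) obstacle is reconfirming the failure of \emph{(O1)} for the matrix product, since this is precisely what prevents the Haagerup tensor product from being upgraded to a Banach $^*$-algebra in this framework. Everything else is bookkeeping against the preceding theorem, and the conclusion matches the known statements cited in \cite{kumarideal, rajpalschur}.
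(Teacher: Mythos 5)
Your proposal is correct and follows exactly the route the paper intends: the corollary is stated as an immediate consequence of the preceding theorem, obtained by checking that the Kronecker and Schur products satisfy (O1) and (W2) while the matrix product satisfies (W2) but not (O1), all of which is recorded in the verification paragraph of Section 3 and the list of examples in Section 2. Nothing further is needed.
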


In general, $\lambda$-tensor product of operator spaces is not injective. Since, $(\ot^{\lambda} A_i)^*=CB_{\lambda}(A_1\times A_2 \cdots A_m, \C)$\cite[Proposition 4.11]{wiesnerpolynomials} completely isometrically, so the proof of 
\cite[Theorem 5]{kumarideal} can be adopted in this case to show the injectivity of $\lambda$-tensor product for the closed ideals, i.e.

\begin{proposition}
Let $I_i$  be closed two-sided ideals in $C^{*}$-algebras $A_i$ for $ i=1,2,\cdots,m$, then $\ot^{\lambda}I_i$ is a closed two-sided $^*$-ideal of $\ot^{\lambda}A_i$.
\end{proposition}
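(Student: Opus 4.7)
The plan is to reduce the claim to two steps: (a) the canonical map $\ot^{\lambda} I_i \to \ot^{\lambda} A_i$ is an isometric embedding, and (b) its image is a closed two-sided $*$-ideal.

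For (a), invoke the duality $(\ot^{\lambda} X_i)^* \cong CB_{\lambda}(X_1,\ldots,X_m;\C)$ from \cite[Proposition 4.11]{wiesnerpolynomials}; by Hahn-Banach, it suffices to extend every $\phi \in CB_{\lambda}(I_1,\ldots,I_m;\C)$ to some $\tilde\phi \in CB_{\lambda}(A_1,\ldots,A_m;\C)$ with $\|\tilde\phi\|_{cb,\lambda} \leq \|\phi\|_{cb,\lambda}$ (the reverse inequality being automatic from restriction to $I_i$). One uses the standard $C^*$-ideal construction: since each $I_i^{**}$ is a weak-$*$ closed two-sided ideal of $A_i^{**}$, it equals $A_i^{**}p_i$ for a central projection $p_i$. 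Define
\[
\tilde\phi(a_1,\ldots,a_m) := \phi^{**}(p_1 a_1,\ldots,p_m a_m) = \lim_\alpha \phi(a_1 e_1^\alpha,\ldots, a_m e_m^\alpha),
\]
where $\phi^{**}$ is the separately weak-$*$ continuous extension to the biduals, and the limit is taken along (ultrafilters of) contractive approximate identities $(e_i^\alpha)$ of $I_i$. To check the $\lambda$-cb norm, evaluate $\tilde\phi_{\lambda_k}$ on a tuple $(x^{(1)},\ldots,x^{(m)}) \in M_k(A_1) \times \cdots \times M_k(A_m)$: it is the corresponding limit of $\phi_{\lambda_k}\!\bigl(x^{(1)}(e_1^\alpha)_k,\ldots, x^{(m)}(e_m^\alpha)_k\bigr)$, whose terms have norm $\leq \|\phi\|_{cb,\lambda}\prod_i \|x^{(i)}\|$ since $x^{(i)}(e_i^\alpha)_k \in M_k(I_i)$ with norm $\leq \|x^{(i)}\|$.

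For (b), the algebraic tensor product $\ot I_i$ is plainly a two-sided $*$-ideal of $\ot A_i$: on elementary tensors, $(a_1\ot\cdots\ot a_m)(b_1\ot\cdots\ot b_m) = a_1 b_1\ot\cdots\ot a_m b_m \in \ot I_i$ whenever all $b_j$ (or all $a_j$) lie in $I_j$, and the involution $a_1\ot\cdots\ot a_m \mapsto a_1^*\ot\cdots\ot a_m^*$ preserves $\ot I_i$ by $*$-closure of each $I_i$. The preceding theorem shows $\ot^{\lambda} A_i$ is a Banach $*$-algebra under $(W2)$ and $(O1)$, so multiplication and involution are jointly continuous; combining this with (a), the norm closure $\ot^{\lambda} I_i$, identified with its image in $\ot^{\lambda} A_i$, inherits the ideal and $*$-structure.

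The main obstacle is step (a): producing the separately weak-$*$ continuous extension $\phi^{**}$ and verifying that $\tilde\phi$ is $\lambda$-cb with the same norm. The bidual picture reduces the multilinear extension to an iterated single-variable problem, where the standard Arveson/Paulsen-Smith theory applies. The $\lambda$-structure is respected because multiplication by $(p_i)_k = I_k \ot p_i$ commutes with the $\lambda_k$-tensorization, so the cb-norm estimate at matrix level $k$ reduces to what $\phi$ already controls on $M_k(I_i)$; this is the technical content of adapting \cite[Theorem 5]{kumarideal} from the Haagerup tensor norm to general $\lambda$ satisfying $(O1)$-$(O3)$ and $(W2)$.
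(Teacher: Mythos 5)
Your proposal is correct and follows essentially the route the paper intends: the paper gives no detailed argument, stating only that the duality $(\ot^{\lambda}A_i)^* = CB_{\lambda}(A_1\times\cdots\times A_m;\C)$ lets one adapt the proof of \cite[Theorem 5]{kumarideal}, and your extension of $\lambda$-cb multilinear forms from the ideals via the central support projections $p_i\in A_i^{**}$ and approximate identities, followed by the density/continuity argument for the ideal and $*$-structure, is precisely that adaptation. The only point to keep explicit is the existence of the separately normal multilinear extension $\phi^{**}$ for $m>2$ (iterating the bilinear case the paper cites from \cite{haaggroth}), which you correctly flag as the technical core.
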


\begin{lemma}\label{sy6}
Let $W_i,$ $1 \leq i \leq m$ be completely complemented subspaces of the operator spaces $V_i,$  $1\leq i \leq m$ complemented by cb
projection having  cb norm equal to 1, respectively, then $\otimes^{\lambda} W_i$ is a closed subspace of $\otimes^{\lambda} V_i$.
\end{lemma}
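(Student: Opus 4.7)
The plan is to exhibit $\ot^\lambda W_i$ as the image of a completely isometric embedding into $\ot^\lambda V_i$, then invoke completeness to conclude the image is closed. The existence of cb projections $P_i: V_i \to W_i$ with $\|P_i\|_{cb} = 1$ immediately forces the inclusions $\iota_i: W_i \hookrightarrow V_i$ to be complete isometries, since $P_i \circ \iota_i = \mathrm{id}_{W_i}$. Under the standing conditions (N1)--(N2), (E1)--(E3), (W1)--(W2), the $\lambda$-tensor product is an operator space tensor product in the sense of \cite{bptensor}, hence functorial: completely bounded maps on the factors induce a completely bounded map on the $\ot^\lambda$-product with cb norm bounded by the product of the individual cb norms.

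Applying this functoriality to the families $(\iota_i)$ and $(P_i)$ yields completely contractive maps
\[
I: \ot^\lambda W_i \to \ot^\lambda V_i \qquad \text{and} \qquad \Pi: \ot^\lambda V_i \to \ot^\lambda W_i.
\]
On simple tensors $w_1 \ot \cdots \ot w_m$ the composition $\Pi \circ I$ is visibly the identity, and since the algebraic tensor product is dense, $\Pi \circ I = \mathrm{id}_{\ot^\lambda W_i}$ by continuity. A routine estimate at each matrix level---namely, for $u \in M_n(\ot^\lambda W_i)$,
\[
\|u\|_{\lambda,n} = \|\Pi_n(I_n(u))\|_{\lambda,n} \le \|I_n(u)\|_{\lambda,n} \le \|u\|_{\lambda,n},
\]
which collapses to a chain of equalities---then shows $I$ is a complete isometry. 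Since $\ot^\lambda W_i$ is complete by construction, its isometric image in $\ot^\lambda V_i$ is closed, proving the claim.

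The main obstacle is the functoriality step itself, i.e., verifying that cb maps on the factors tensorize to a cb map at the $\lambda$-level. One may either appeal directly to the Blecher--Paulsen axioms of an operator space tensor product from \cite{bptensor}, or check it from scratch using the decomposition $u = \alpha \ot_{\lambda_r}(v^{(1)}, \ldots, v^{(m)})\beta$ of elements of $M_n(\ot_\lambda V_i)$ recalled before \Cref{O1-3}: applying each $\phi_i$ (here either $\iota_i$ or $P_i$) coordinate-wise produces $\alpha \ot_{\lambda_r}(\phi_1(v^{(1)}), \ldots, \phi_m(v^{(m)}))\beta$ with $\|\phi_i(v^{(i)})\|_{M_r(W_i)} \le \|\phi_i\|_{cb}\|v^{(i)}\|_{M_r(V_i)}$, and the infimum in \eqref{lnorm} delivers the required bound on the image. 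Everything else is packaging, and no additional assumption on the subspaces is used beyond the hypothesis that the projections have cb norm exactly $1$.
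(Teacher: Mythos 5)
Your proposal is correct and follows essentially the same route as the paper: both arguments rest on the functoriality of the $\lambda$-tensor product (\cite[Proposition 6.1]{wiesnerpolynomials}) applied to the contractive projections $P_i$, which forces $\|u\|_{\otimes^{\lambda}W_i}\leq\|u\|_{\otimes^{\lambda}V_i}$ on the algebraic tensor product and hence an isometric (in your version, completely isometric) embedding whose image is closed by completeness. Your write-up is somewhat more explicit about the reverse inequality and the matrix levels, but the underlying idea is identical.
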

\begin{proof}
Using the assumption, there are cb projections  $P_1, P_2, \cdots, P_m$ from  $V_1$ onto  $W_1$, $V_2$ onto $W_2   \cdots$, $V_m$ onto $W_m$ with $\|P_1\|_{cb}=\|P_2\|_{cb}=\cdots=\|P_m\|_{cb}=1$.  Therefore, by the functoriality of the $\lambda$- tensor product(\cite[Proposition 6.1]{wiesnerpolynomials}, $\otimes^{m}_{i=1} P_i: \otimes^{\lambda} V_i \to \otimes^{\lambda} W_i$ is a completely bounded map and $\| \otimes^{m}_{i=1}  P_i\|_{cb}\leq 1$. Since, for $u\in  \ds\otimes_{i=1}^{m} V_i$, $\otimes^{m}_{i=1} P_i(u)=u$, so $\|u\|_{\otimes^{\lambda} W_i}\leq \|u\|_{\otimes^{\lambda}  V_i}$, hence $\otimes^{\lambda}  W_i$ is a closed subspace of $ \otimes^{\lambda} V_i$.\end{proof}

Since, there is  a conditional expectation from a $C^*$-algebra $A$ onto a finite dimensional $C^*$-subalgebra of $A$, so  by the above Lemma for finite dimensional $C^*$-algebras, $\lambda$-tensor product of operator spaces is injective. In general $\|\cdot\|_{\lambda}$ need not be injective.\\

 However, for $\widehat{\otimes}$, we have something partial:
\begin{proposition}\label{on1110}
Let $A_0$ and $B_0$ be closed $^*$-subalgebras of $A$ and $B$, respectively, then $A_0 \widehat{\otimes} B_0$ is \emph{(isomorphic to)} closed $^*$-subalgebra of $A\widehat{\otimes} B$.
\end{proposition}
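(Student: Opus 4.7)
The map $\iota : A_0 \widehat{\otimes} B_0 \to A \widehat{\otimes} B$ is a continuous $*$-homomorphism of Banach $*$-algebras, by the preceding theorem applied to $\lambda = \otimes$ (the Kronecker product satisfies (W2) and (O1)). Since the image of a continuous $*$-homomorphism with closed range is automatically a closed $*$-subalgebra, the problem reduces to showing that $\iota$ is bounded below; by the open mapping theorem this is equivalent to surjectivity with norm control of the adjoint map.

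The plan is to invoke duality. Under the identification $(A \widehat{\otimes} B)^* \cong CB(A, B^*)$ from \cite[Proposition 4.11]{wiesnerpolynomials} (and analogously for $A_0, B_0$), the adjoint of $\iota$ becomes the restriction map
\[
\iota^* : CB(A, B^*) \longrightarrow CB(A_0, B_0^*), \qquad \phi \longmapsto q_{B_0} \circ \phi|_{A_0},
\]
where $q_{B_0} : B^* \twoheadrightarrow B_0^*$ is the canonical quotient. It then suffices to prove the following extension statement: every $\psi \in CB(A_0, B_0^*)$ admits a lift to $\phi \in CB(A, B^*)$ with $q_{B_0} \circ \phi|_{A_0} = \psi$, along with an estimate $\|\phi\|_{cb} \le C \|\psi\|_{cb}$ for a universal constant $C$. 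The natural two-step strategy is: first lift $\psi$ across the quotient $q_{B_0}$ to some $\widetilde{\psi} \in CB(A_0, B^*)$, exploiting that $B_0^{**}$ sits as a weak-$*$ closed $*$-subalgebra of $B^{**}$; second, extend $\widetilde{\psi}$ across the $C^*$-inclusion $A_0 \hookrightarrow A$ by passing through an ambient embedding $B^* \hookrightarrow B(H)$, invoking the Arveson--Wittstock theorem to extend to $B(H)$, and then composing with the canonical projection $B^{***} \twoheadrightarrow B^*$ to return to $B^*$.

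The main obstacle is coordinating the two extensions so that the final map indeed restricts back to $\psi$ on $A_0$ (not merely to a perturbation of it) while preserving cb-norm control. The delicacy arises because $B^*$ is not an injective operator space in general, so the Arveson--Wittstock step has to be corrected by the bidual projection, and this correction must be compatible with the first lift. A cleaner and more conceptual route, if available, is to invoke an extension theorem for jointly completely bounded bilinear forms on pairs of $C^*$-algebras in the Effros--Ruan/Pisier tradition, possibly via Grothendieck--Haagerup type bilinear inequalities applied at the bidual level; this would immediately yield surjectivity of $\iota^*$ with the requisite norm control. Once the extension is established, $\iota$ is bounded below with closed range, and the $*$-subalgebra property of the image is automatic since $\iota$ is a $*$-homomorphism.
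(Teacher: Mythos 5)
Your reduction is the right one, and the ``cleaner and more conceptual route'' you mention at the end is in fact exactly the paper's proof --- but as written your argument has a genuine gap, because you never commit to that route. The step you leave conditional (``if available'') is the entire content of the proposition: the paper establishes the two-sided estimate $\|u\|_{A\widehat{\otimes}B} \le \|u\|_{A_0\widehat{\otimes}B_0} \le 2\|u\|_{A\widehat{\otimes}B}$ for $u \in A_0 \otimes B_0$ by taking a norming functional $f$ on $A_0\widehat{\otimes}B_0$, viewing it as a jointly completely bounded bilinear form $\phi_0$ on $A_0 \times B_0$, and extending it to a jcb form $\phi$ on $A \times B$ with $\|\phi\|_{jcb}\le 2\|\phi_0\|_{jcb}$ via \cite[Corollary 3.10]{erconj} --- the extension theorem that comes out of the operator space Grothendieck theorem (the resolution of the Effros--Ruan conjecture). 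That theorem is available, with universal constant $2$, which is precisely why the statement reads ``(isomorphic to)'' rather than ``(isometric to)''. Without naming and invoking it, surjectivity of $\iota^*$ with norm control is not obtained, and the proof does not close.

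The strategy you actually develop in detail --- lifting across $q_{B_0}$ and then extending across $A_0\hookrightarrow A$ by Arveson--Wittstock followed by the projection $B^{***}\twoheadrightarrow B^*$ --- fails for the reason you yourself flag: $B^*$ is not injective, Arveson--Wittstock only lands in $B(H)$, and correcting by the bidual projection destroys the property that the extension restricts to $\psi$ on $A_0$. There is no way to ``coordinate'' the two steps in general; that obstruction is exactly what makes the jcb extension theorem a deep result rather than a soft one, so the right move is to discard that route and cite the extension theorem directly. Two minor remaining points, both routine and both handled in the paper: once the norm equivalence holds on the dense subspace $A_0\otimes B_0$, injectivity, boundedness below, and surjectivity of the extended map onto the closure of $A_0\otimes B_0$ in $A\widehat{\otimes}B$ follow by standard completion arguments; and the contractive direction $\|u\|_{A\widehat{\otimes}B}\le\|u\|_{A_0\widehat{\otimes}B_0}$ comes from functoriality of $\widehat{\otimes}$ under complete contractions, as you indicate.
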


\begin{proof}
Let $I$ denote the closure of $A_0 \otimes B_0$ in $A\widehat{\otimes} B$, so that $I$ is a closed $^*$-subalgebra of
$A\widehat{\otimes} B$. We first claim that $\|u\|_{A\widehat{\otimes} B}\leq \|u\|_{A_0 \widehat{\otimes} B_0}
\leq 2\|u\|_{A\widehat{\otimes} B}$ for  $u\in A_0 \otimes B_0$. Choose $f\in (A_0 \widehat{\otimes} B_0)^*$ such that
$f(u)=\|u\|_{A_0 \widehat{\otimes} B_0}$ with $\|f\|=1$.  Let $\phi_0$ be the  jcb bilinear form on
$A_0 \times B_0$ corresponding to $f$.  By (\cite[Corollary 3.10]{erconj}), $\phi_0: A_0 \times B_0\to \mathbb{C} $
extends to a jcb bilinear form $\phi: A \times B\to \mathbb{C} $ such that $\|\phi\|_{jcb}\leq 2\|\phi_0\|_{jcb}$. Therefore $\|\tilde{f}\| \leq 2$,
where $\tilde{f}$ is the linear functional on $A\widehat{\ot} B$ corresponding to $\phi$, and thus the claim. Now
consider the identity map $i:(A_0 \otimes B_0,\|\cdot\|_{A_0\widehat{\ot}B_0})\to (A \otimes B,\|\cdot\|_{A\widehat{\ot}B})$ which is linear and continuous by the last claim, so it can be extended to $\widetilde{i}:A_0 \widehat{\otimes} B_0\to A \widehat{ \otimes} B$.
We now show that $A_0 \widehat{\otimes} B_0$ is isomorphic to $I$.  For the injectivity of $\widetilde{i}$, by \cite[Theorem 2]{jainideal2}, it is enough to show
that it is injective on $A_0 \otimes B_0$ but this follows directly by the last inequality. Again, by the last inequality,
$\widetilde{i}^{-1}$ is continuous.  For onto-ness, let $u\in I$. There is a sequence $u_n\in A_0 \otimes B_0$
 converging to $u$ in $\|\cdot\|_{A\widehat{\otimes} B}$-norm. The sequence $\{u_n\}$ becomes Cauchy with respect to
 $\|\cdot\|_{A_0\widehat{\otimes} B_0}$-norm by the last claim, so it converges, say, to $u'$.  Clearly,
 $\widetilde{i}(u')=u$. Thus $A_0 \widehat{\otimes} B_0$ can be regarded as a closed $^*$-subalgebra of $A\widehat{\otimes} B$. \end{proof}

\begin{proposition}\label{sc119}
For $C^*$-algebras $A$ and $B$, any  $\lambda$-cb bilinear form $\phi$ on $A\times B$  can be extended uniquely to  $\tilde{\phi}$ on $A^{**}\times B^{**}$ such that $\|\phi\|_{\lambda}=\|\tilde{\phi}\|_{\lambda}$.
 \end{proposition}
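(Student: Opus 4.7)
The plan is to construct $\tilde\phi$ via the standard iterated weak$^*$-limit (Aron--Berner style) extension, and then use Goldstine's theorem at each matrix level to transfer the $\lambda$-cb norm. First, for fixed $b\in B$ the map $\phi(\cdot,b):A\to\C$ is a bounded linear functional, hence belongs to $A^*$ and extends uniquely to a weak$^*$-continuous functional $\hat\phi_b\in(A^{**})^*$ via the canonical identification. For each $a^{**}\in A^{**}$, the assignment $b\mapsto \hat\phi_b(a^{**})$ is a bounded linear functional on $B$ (boundedness coming from $\|\hat\phi_b\|=\|\phi(\cdot,b)\|\leq \|\phi\|_{\lambda,1}\|b\|$), and we extend it weak$^*$-continuously in the second variable to obtain $\tilde\phi:A^{**}\times B^{**}\to\C$. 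By construction $\tilde\phi$ is separately weak$^*$-continuous and restricts to $\phi$ on $A\times B$; uniqueness follows because any separately weak$^*$-continuous bilinear form on $A^{**}\times B^{**}$ is determined by its values on the weak$^*$-dense set $A\times B$.

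Next I would verify the norm equality. Since $\phi$ is a restriction of $\tilde\phi$, trivially $\|\phi\|_{cb,\lambda}\leq \|\tilde\phi\|_{cb,\lambda}$. For the reverse, fix $k\in\N$ and pick $X\in M_k(A^{**})$, $Y\in M_k(B^{**})$ with $\|X\|,\|Y\|\leq 1$. The canonical identifications $M_k(A)^{**}\cong M_k(A^{**})$ and $M_k(B)^{**}\cong M_k(B^{**})$ (as Banach spaces, using finite-dimensionality of $M_k$) combined with Goldstine's theorem give nets $(X^\gamma)\subset M_k(A)_{\|\cdot\|\leq 1}$ and $(Y^\delta)\subset M_k(B)_{\|\cdot\|\leq 1}$ converging weak$^*$ to $X$ and $Y$ respectively, i.e.\ entrywise in the weak$^*$ topology. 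Writing out $\tilde\phi_{\lambda_k}(X^\gamma,Y)$ as a finite sum
\[
\tilde\phi_{\lambda_k}(X^\gamma,Y)=\sum_{i,j}\sum_{k,l}\lambda_k(\vep^{[k]}_{i,j},\vep^{[k]}_{k,l})\,\tilde\phi(X^\gamma_{i,j},Y_{k,l})
\]
and using separate weak$^*$-continuity of $\tilde\phi$ in the first coordinate, we deduce $\tilde\phi_{\lambda_k}(X^\gamma,Y)\to\tilde\phi_{\lambda_k}(X,Y)$ in the finite-dimensional space $M_{\tau(k)}$ (where weak$^*$ and norm topologies coincide). Iterating with the second variable yields
\[
\tilde\phi_{\lambda_k}(X,Y)=\lim_\gamma\lim_\delta \phi_{\lambda_k}(X^\gamma,Y^\delta),
\]
so $\|\tilde\phi_{\lambda_k}(X,Y)\|\leq \|\phi\|_{cb,\lambda}$, and taking the supremum over $k$, $X$, $Y$ gives $\|\tilde\phi\|_{cb,\lambda}\leq\|\phi\|_{cb,\lambda}$.

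The main obstacle I anticipate is justifying the iterated limit interchange and the separate weak$^*$-continuity at the amplified level: one must verify that separate weak$^*$-continuity of $\tilde\phi$ transfers to separate weak$^*$-continuity of $\tilde\phi_{\lambda_k}$ as a bilinear map $M_k(A^{**})\times M_k(B^{**})\to M_{\tau(k)}$. Because $\phi_{\lambda_k}$ is a finite linear combination of entrywise evaluations of $\tilde\phi$ weighted by the fixed matrices $\lambda_k(\vep^{[k]}_{i,j},\vep^{[k]}_{r,s})$, this reduces immediately to the scalar case, so no new content is needed beyond the functional-analytic Aron--Berner mechanism. Uniqueness of $\tilde\phi$ already forces uniqueness of each $\tilde\phi_{\lambda_k}$, completing the argument.
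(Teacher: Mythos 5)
Your overall strategy is the same as the paper's: extend $\phi$ to a separately weak$^*$-continuous bilinear form on $A^{**}\times B^{**}$, then at each matrix level approximate elements of the unit balls of $M_k(A^{**})$, $M_k(B^{**})$ by nets from the unit balls of $M_k(A)$, $M_k(B)$ and pass to iterated limits through the finite sum defining $\tilde\phi_{\lambda_k}$ to get $\|\tilde\phi_{\lambda_k}\|\leq\|\phi\|_{cb,\lambda}$. The paper does exactly this, except that it obtains the extension by citing Haagerup's result that a bounded bilinear form on a pair of $C^*$-algebras has a unique \emph{separately normal} extension to the biduals.

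There is one genuine gap in your version: the claim that your hand-built iterated (Aron--Berner) extension is ``by construction'' separately weak$^*$-continuous. It is not. Your construction gives weak$^*$-continuity of $\tilde\phi(a^{**},\cdot)$ on $B^{**}$ for every $a^{**}$, and weak$^*$-continuity of $\tilde\phi(\cdot,b)$ on $A^{**}$ only for $b\in B$ --- not for general $b^{**}\in B^{**}$. But your outer limit $\lim_\gamma\tilde\phi_{\lambda_k}(X^\gamma,Y)=\tilde\phi_{\lambda_k}(X,Y)$ needs weak$^*$-continuity in the first variable against the entries $Y_{kl}\in B^{**}$, which is exactly the direction the construction does not supply. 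For general Banach spaces the two iterated extensions of a bounded bilinear form can disagree and neither need be separately weak$^*$-continuous; that every bounded bilinear form on $C^*$-algebras \emph{is} Arens regular, so that the extension is separately normal in both variables, is a nontrivial theorem (a consequence of the Grothendieck--Haagerup inequality), and it is precisely the input the paper imports as \cite[Corollary 2.4]{haaggroth}. Your uniqueness argument also silently relies on this two-sided continuity. If you replace the phrase ``by construction'' with an appeal to that theorem, the rest of your argument (Goldstine at the matrix level, finite-sum reduction of $\tilde\phi_{\lambda_k}$ to scalar evaluations, iterated limits in the finite-dimensional target $M_{\tau(k)}$) goes through and matches the paper's proof.
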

\begin{proof}
Since $\phi: A\times B\to \C$ is $\lambda$-cb bilinear form. It is in particular bounded bilinear form and thus determines a unique separately normal bilinear form  $\tilde{\phi}: A^{**} \times B^{**} \to \C$ by \cite[Corollary 2.4]{haaggroth}. For  $k\in \mathbb{N}$, consider the map $\tilde{\phi}_k: M_k(A^{**}) \times M_k(B^{**}) \to M_{\tau(k)}$ taking $\tilde{\phi}_k(a_1\ot m, a_2\ot m') = \lambda_k(a_1,a_2)\ot \tilde{\phi}(m,m') $. Let $a^{**}=[a_{ij}^{**}]\in M_k(A^{**})$ and $b^{**}=[b_{ij}^{**}]\in M_k(B^{**})$ with
 $\|a^{**}\|\leq 1$ and $\|b^{**}\|\leq 1$. Since the unit ball of  $M_k(A)$ is $w^*$-dense in the unit ball of  $M_k(A^{**})$, so we obtain  a net $(a_{\lambda})=(a_{ij}^{\lambda})$ (resp., $(b_\nu)$) in $M_k(A)$ (resp., $M_k(B)$) which
 is $w^*$-convergent to $a^{**}$ (resp., $b^{**}$) with $\|a_{\lambda}\|\leq 1$ (resp., $\|b_{\nu}\|\leq 1$). Therefore, $\widehat{a_{ij}^{\lambda}}$ is $w^*$-convergent to $a_{ij}^{**}$ for each $i,j$. Now by the separate normality  of $\tilde{\phi}$, we have
 $\|\lambda_k\ot \tilde{\phi}(\ds\sum_{i,j}\epsilon_{ij} \ot a_{ij}^{**}, \ds\sum_{p,l}\epsilon_{pl} \ot b_{pl}^{**})\|= \|\ds\sum_{i,j,p,l} \lambda_k(\epsilon_{ij}, \epsilon_{pl}) \ot \tilde{\phi}( a_{ij}^{**},b_{pl}^{**})\|=\|\ds\lim_\lambda \ds\lim_\nu \ds\sum_{i,j, p,l} \lambda_k(\epsilon_{ij}, \epsilon_{pl}) \ot \tilde{\phi}( a_{ij}^{\lambda},b_{pl}^{\nu})\|=\ds\lim_\lambda \ds\lim_\nu\| \ds\sum_{i,j, p,l} \lambda_k(\epsilon_{ij}, \epsilon_{pl}) \ot \phi( a_{ij}^{\lambda},b_{kl}^{\nu})\|\leq \|\lambda_k\ot \phi\|$ for each $k\in \mathbb{N}$. Thus $\|\tilde{\phi}_k\|\leq \|\phi_k\|\leq \|\phi\|_{\lambda}$  for every $k\in \mathbb{N}$.  Clearly, $\|\phi\|_{\lambda}\leq \|\tilde{\phi}\|_{\lambda}$ as $\phi$ being the restriction of $\tilde{\phi}$. Hence $\|\phi\|_{\lambda}=\|\tilde{\phi}\|_{\lambda}$.
\end{proof}
For a tensor norm $\alpha$ and a closed ideal $J$ of $A\ot^{\alpha} B$, we try to find out whether $a\ot b\in J_{\min}$ implies that $a\ot b\in J$. This question
stems from the study of the elusive nature of the Haagerup tensor product of $C^*$-algebras, it was resolved  for the Haagerup  tensor product in (\cite[Theorem 4.4]{allenideal}) and for the operator space projective tensor product in \cite[Theorem 6]{kumarideal}. We present here a unified approach.\\For $C^*$-algebras $A$ and $B$, assume that $\|\cdot\|_{\lambda}\geq \|\cdot\|_{\min}$ on $A \ot B$, so there will be a identity map $i$ from $A\otimes^{\lambda} B$ into $A\otimes^{\min} B$. 

\begin{lemma}\label{lemma}
Let $M$ and $N$ be von Neumann algebras and let $L$ be a closed ideal in $M\otimes^{\lambda}N$, where the sequence $\lambda=(\lambda_n)_{n\in \N}$ satisfies condition \emph{(W2)}. If $1\otimes1 \in L_{\min},$ then $1 \otimes 1 \in L$ and $L$ equals $M\otimes^{\lambda}N$.
\end{lemma}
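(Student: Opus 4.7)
The plan is a Hahn-Banach argument combined with the bilinear form extension of Proposition \ref{sc119} and the direct-summand structure of a von Neumann algebra inside its bidual. Suppose for contradiction that $1 \otimes 1 \notin L$. Since $L$ is closed in $M \otimes^{\lambda} N$, the Hahn-Banach theorem supplies a continuous linear functional $f \in (M \otimes^{\lambda} N)^{*}$ with $f|_{L} \equiv 0$ and $f(1 \otimes 1) \neq 0$. Under the completely isometric identification $(M \otimes^{\lambda} N)^{*} \cong CB_{\lambda}(M \times N; \C)$ noted earlier in the paper, $f$ corresponds to a $\lambda$-cb bilinear form $\phi : M \times N \to \C$.

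To bring the hypothesis $1 \otimes 1 \in L_{\min}$ into play, I would extend $\phi$ via Proposition \ref{sc119} to a $\lambda$-cb bilinear form $\tilde{\phi}$ on $M^{**} \times N^{**}$ of the same norm, which by construction is separately weak*-continuous. Since $M$ and $N$ are themselves von Neumann algebras, each sits as a weak*-closed direct summand of its bidual via a central projection $z_{M} \in M^{**}$ (resp.\ $z_{N} \in N^{**}$) with $z_{M} M^{**} \cong M$ and $z_{N} N^{**} \cong N$. Multiplication by a central element is weak*-continuous, so the bilinear form $(a,b) \mapsto \tilde{\phi}(z_{M} a, z_{N} b)$ is separately weak*-continuous on $M^{**} \times N^{**}$ and still extends $\phi$; by the uniqueness of the separately normal extension (implicit in the weak*-density argument used in the proof of Proposition \ref{sc119}) it must coincide with $\tilde{\phi}$. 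Thus $\tilde{\phi}$ factors through the normal parts, and its restriction $\phi$ is separately normal on $M \times N$ in the von Neumann algebra sense, i.e.\ with respect to the genuine predual topologies $\sigma(M, M_{*})$ and $\sigma(N, N_{*})$.

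Next, separate normality of $\phi$ together with the classical predual duality $(M \bar{\otimes} N)_{*} \cong M_{*} \hat{\otimes} N_{*}$ yields a unique normal linear functional $\Phi$ on the von Neumann tensor product $M \bar{\otimes} N$ extending $\phi$. The canonical map $\iota : M \otimes^{\lambda} N \to M \bar{\otimes} N$ is continuous since $\|\cdot\|_{\lambda} \geq \|\cdot\|_{\min}$ and $M \otimes^{\min} N \hookrightarrow M \bar{\otimes} N$ is isometric. Both $f$ and $\Phi \circ \iota$ are continuous on $M \otimes^{\lambda} N$ and agree with $\phi$ on the dense subspace $M \otimes N$, so $f = \Phi \circ \iota$. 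The hypothesis provides a net $\{u_{\alpha}\} \subseteq L$ with $\iota(u_{\alpha}) \to 1 \otimes 1$ in $\|\cdot\|_{\min}$, and hence in the norm of $M \bar{\otimes} N$, so $\Phi(\iota(u_{\alpha})) \to \Phi(1 \otimes 1) = \phi(1,1) = f(1 \otimes 1) \neq 0$; on the other hand $\Phi(\iota(u_{\alpha})) = f(u_{\alpha}) = 0$ since $u_{\alpha} \in L$. This contradiction establishes $1 \otimes 1 \in L$, whence $L = M \otimes^{\lambda} N$.

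The hard part will be the factoring step in the second paragraph: verifying that the separately normal extension $\tilde{\phi}$ on $M^{**} \times N^{**}$ actually depends only on the normal parts $z_{M} a$ and $z_{N} b$, so that the restriction $\phi$ inherits separate normality with respect to $M_{*}$ and $N_{*}$ rather than merely with respect to the full dual spaces inherited from the bidual construction. The uniqueness of the separately normal extension, resting on the $w^{*}$-density of the unit ball of $M_{k}(M)$ in that of $M_{k}(M^{**})$ used inside the proof of Proposition \ref{sc119}, is the crucial ingredient that makes this factoring work.
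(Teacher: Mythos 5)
Your argument has a fatal gap at exactly the step you flag as ``the hard part,'' and the gap cannot be repaired: the factoring through the normal parts is false. If $z_M\in M^{**}$ is the central projection with $z_M M^{**}\cong M$, the canonical image $\hat a$ of $a\in M$ in $M^{**}$ is \emph{not} equal to $z_M\hat a$ (they agree only when $M$ is finite dimensional), so the form $(a,b)\mapsto\tilde\phi(z_M a,z_N b)$ does \emph{not} restrict to $\phi$ on $M\times N$ --- it restricts to the ``normal part'' of $\phi$, which coincides with $\phi$ precisely when $\phi$ is already separately normal. And a $\lambda$-cb bilinear form on a pair of von Neumann algebras need not be separately normal: already $\phi(a,b)=\psi(a)\chi(b)$ with $\psi$ a singular state on $\ell^\infty$ is jointly completely bounded but not normal in the first variable. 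Consequently $f$ need not factor through $M\bar\otimes N$ or even through $M\otimes^{\min}N$, and the contradiction at the end evaporates. A structural warning sign is that your argument never uses that $L$ is an \emph{ideal}: for a mere $\lambda$-closed subspace the statement is false (take $L=\ker f$ for a $\lambda$-continuous functional $f$ that is not $\min$-continuous and has $f(1\otimes 1)=1$; then $L$ is $\min$-dense, so $1\otimes 1\in L_{\min}\setminus L$). Any correct proof must exploit the ideal structure in an essential way.

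The paper's proof does exactly that, following the Allen--Sinclair--Smith scheme. Starting from $w\in L$ with $\|i(w)-1\otimes 1\|_{\min}<\tfrac12$, it applies Dixmier-type averaging operators $\phi_n\otimes\psi_n$ (convex combinations of $u(\cdot)u^*\otimes v(\cdot)v^*$, under which the two-sided ideal $L$ is invariant) to produce an element $z\in L\cap\bigl(Z(M)\otimes^{\lambda}Z(N)\bigr)$ with $\|i(z)-1\otimes 1\|_{\min}\le\tfrac12$. Then $i(z)$ is invertible in $Z(M)\otimes^{\min}Z(N)$, and semisimplicity, regularity and spectral permanence for the commutative Banach algebra $Z(M)\otimes^{\lambda}Z(N)$ upgrade this to invertibility of $z$ in $Z(M)\otimes^{\lambda}Z(N)$; since $z\in L$ and $L$ is an ideal, $1\otimes 1=zz^{-1}\in L$. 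You would need to rebuild your argument along these lines rather than via a Hahn--Banach separation.
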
 
\begin{proof}
Since, $1\otimes1 \in L_{\min}$, for a given $\epsilon=\dfrac{1}{2}$, there exists  $w\in L$ such that  $\|i(w)-1\otimes1\|_{\min}< \dfrac{1}{2}$. Let $w= \displaystyle\sum_{t=1}^{\infty}\alpha_t \ot_{\lambda_{r_t}}(u^{(t)},v^{(t)}) \beta_t;$  be a norm convergent representation in $M\otimes^{\lambda}N$ where for $t=1,2,\ldots,\;\; r_t \in \N$,
\begin{align}
\begin{split}
\alpha_t \in M_{1, \tau(r_t)}, \;\; \beta_t \in M_{\tau(r_t),1},\;\;  u^{(t)}:= \ds \sum_{(i_t,j_t)} \vep^{[r_t]}_{i_t,j_t} \otimes u_{i_t,j_t}^{(t)},\; \;\; & v^{(t)}:= \ds\sum_{(k_t,l_t)} \vep^{[r_t]}_{k_t,l_t} \otimes v_{k_t,l_t}^{(t)}. \notag
\end{split}
\end{align}
By \cite[ Theorem 8.3.5]{krfundamentals}, there exist sequences $\{x_{i_t,j_t}^{(t)}\}\in Z(M)$, $\{y_{i_t,j_t}^{(t)}\}\in Z(N)$,
$\{\phi_{n}\}\in P(M)$ and $\{\psi_{n}\}\in P(N)$ such that
\begin{eqnarray}\label{b3}
  \displaystyle\lim_{n\rightarrow \infty} \|\phi_{n}( u_{i_t,j_t}^{(t)})-  x_{i_t,j_t}^{(t)}\|=\displaystyle\lim_{n\rightarrow \infty} \|\psi_{n}(v_{k_t,l_t}^{(t)})- y_{i_t,j_t}^{(t)}\|=0\;\;(t=1,2,\ldots)
\end{eqnarray}
where $P(M)$ denotes the set of all mappings $\phi: M \to M$ such that, for $m\in M$, $\phi(m)$ is in the convex hull of the set $\{\mathrm{umu}^*: \mathrm{u} \in U(M)\}$.\\
For each $n\in \mathbb{N}$, using the contractive maps $\phi_{n}\otimes \psi_{n}$ on $M\otimes^{\lambda} N $ (\cite[Proposition 6.1]{wiesnerpolynomials}) and invariance of ideal $L$ under $\phi_{n}\otimes \psi_{n}$, we have for all positive integers $k\leq l$

\begin{align}
\begin{split}
& \left\| \displaystyle\sum_{t=k}^{l} \displaystyle\alpha_t \big(\sum_{(k_t,l_t)} \sum_{(i_t,j_t)}  \lambda_{r_t} (\vep^{[r_t]}_{i_t,j_t} , \vep^{[r_t]}_{k_t,l_t})\otimes  x_{i_t,j_t}^{(t)} \otimes y_{k_t,l_t}^{(t)}\big) \beta_t \right\|_{\lambda,1} \\
& \leq \left\|  \displaystyle\sum_{t=k}^{l} \alpha_t \big(\sum_{(k_t,l_t)} \sum_{(i_t,j_t)}  \lambda_{r_t} (\vep^{[r_t]}_{i_t,j_t} , \vep^{[r_t]}_{k_t,l_t})\otimes  x_{i_t,j_t}^{(t)} \otimes y_{k_t,l_t}^{(t)}\big) \beta_t \right. \\ & 
\qquad \qquad- \left. \displaystyle\sum_{t=k}^{l} \alpha_t \big(\sum_{(k_t,l_t)} \sum_{(i_t,j_t)}  \lambda_{r_t} (\vep^{[r_t]}_{i_t,j_t} , \vep^{[r_t]}_{k_t,l_t}) \otimes \phi_n(u_{i_t,j_t}^{(t)})  \otimes \psi_n(v_{k_t,l_t}^{(t)}) \big) \beta_t \right\|_{\lambda,1} 
\\
 & \qquad+ \left\| \displaystyle\sum_{t=k}^{l} \alpha_t \big(\sum_{(k_t,l_t)} \sum_{(i_t,j_t)}  \lambda_{r_t} (\vep^{[r_t]}_{i_t,j_t} , \vep^{[r_t]}_{k_t,l_t})\otimes\phi_n(u_{i_t,j_t}^{(t)})   \otimes \psi_n(v_{k_t,l_t}^{(t))}\big) \beta_t \right\|_{\lambda,1} \\
& \leq \left\|\displaystyle\sum_{t=k}^{l}\displaystyle\alpha_t \big(\sum_{(k_t,l_t)} \sum_{(i_t,j_t)}  \lambda_{r_t} (\vep^{[r_t]}_{i_t,j_t} , \vep^{[r_t]}_{k_t,l_t})\otimes  u_{i_t,j_t}^{(t)} \otimes v_{k_t,l_t}^{(t)}\big) \beta_t \right\|_{\lambda,1} \; \;\;(\text{Using}\;\;\ref{b3})\\
&\qquad \longrightarrow 0 \;\; \text{as}\;\;n \rightarrow \infty,\;\;\text{being the partial sum} \;\; \text{of} \;\;w.\notag
\end{split}
\end{align}

Therefore, one can define an element
\[\begin{array}{c}
  z=   \displaystyle\sum_{t=1}^{\infty} \displaystyle\alpha_t \big(\sum_{(k_t,l_t)} \sum_{(i_t,j_t)}  \lambda_{r_t} (\vep^{[r_t]}_{i_t,j_t} , \vep^{[r_t]}_{k_t,l_t})\otimes  x_{i_t,j_t}^{(t)} \otimes y_{k_t,l_t}^{(t)}\big) \beta_t\in Z(M)\otimes ^{\lambda}Z(N).
\end{array}\]

For sufficiently large choice of $n$ and $\epsilon >0$, we deduce easily that
\begin{align}\label{b4}
\| \phi_{n}\otimes \psi_{n}(w)-z\|_{\lambda}&< \epsilon
\end{align}
Since, $L$  is left invariant by $\phi_{n}\otimes \psi_{n}$ for each $n$, so
\begin{align}
z&=\displaystyle\lim_{n\rightarrow \infty}( \phi_{n}\otimes \psi_{n}(w)\in L \cap (Z(M)\otimes ^{\lambda}Z(N)) \notag
\end{align}

It is easy to show that
$i\circ (\phi_{n}\otimes^{\lambda} \psi_{n})=(\phi_{n}\otimes^{\min} \psi_{n})\circ i$.

\begin{align}\label{a21}
 \|(\phi_{n}\otimes^{\min} \psi_{n})(i(w))-1\otimes1\|_{\min}&=\|(\phi_{n}\otimes^{\min} \psi_{n})(i(w))-(\phi_{n}\otimes^{\min} \psi_{n})(i(1\otimes1))\|_{\min}\notag \\
 &\leq \|i(w)-1\otimes1\|_{\min}< \dfrac{1}{2}.
\end{align}
By (\ref{b4}), we have
\begin{align}\label{a23}
 \|i\circ (\phi_{n}\otimes^{\lambda} \psi_{n})(w)-i(z)\|_{\min}< \epsilon, \text{ for sufficiently large n}.
\end{align}
Then the inequality
\begin{align}\label{a1009}
 \|i(z)-1\otimes 1\|_{\min}\leq \dfrac{1}{2}< 1.
\end{align}
is a consequence of \eqref{a21}, \eqref{a23} and triangle inequality, and so $i(z)$ is invertible in
$L_{\min}\cap (Z(M)\otimes ^{\min}Z(N))$.
Using similar arguments as in (\cite[Theorem 2.11.6, Lemma 2.11.1]{kaniuth} and the fact that $Z(M)$ is a nuclear $C^*$-algebra \cite[ Proposition 1]{geom} we get $Z(M)\otimes^{\lambda}Z(N)$  is semisimple.  The regularity of  $Z(M)\otimes^{\lambda}Z(N)$ follows from \cite[Lemma 4.2.19]{kaniuth}. Since $i(z)$ is invertible in
$L_{\min}\cap (Z(M)\otimes ^{\min}Z(N))$, i.e. invertible in both $L_{\min}$ and $Z(M)\otimes ^{\min}Z(N)$, so
$0\notin \sigma_{Z(M)\otimes ^{\min}Z(N)}(i(z)) $. So  by \cite[Exercise 4.8.12]{kaniuth} $0\notin \sigma_{Z(M)\otimes^{\lambda}Z(N)}(z) $. Hence,
$z$ is invertible in $Z(M)\otimes^{\lambda}Z(N)$, so there exists $w\in Z(M)\otimes ^{\lambda}Z(N)$ such that $zw=wz=1\otimes 1$. Since, $z\in L$ and $L$ being an
ideal, so $1\otimes 1\in L$. \end{proof}

\begin{theorem} Let $\lambda=(\lambda_n)_{n\in \N}$ fulfills \emph{(W2)},
$A$ and $B$ be $C^*$-algebras and let $J$ be a closed ideal in
$A\otimes^{\lambda} B$. If $a \ot  b \in J_{\min}$  then $a \ot b \in J$. In
particular $A\ot^{\lambda} B$ is a *-semi-simple Banach algebra provided $\lambda=(\lambda_n)_{n\in \N}$  further fulfills \emph{(O1)}.
\end{theorem}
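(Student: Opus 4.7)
The plan is to reduce the problem to the von Neumann algebra case already settled by Lemma~\ref{lemma}, via a second-dual embedding and a spectral corner construction. First, I would invoke Proposition~\ref{sc119} to get an isometric embedding $A \ot^\lambda B \hookrightarrow A^{**} \ot^\lambda B^{**}$, and let $\widetilde J$ be the closed ideal in $A^{**} \ot^\lambda B^{**}$ generated by (the image of) $J$. Next, for each $\vep > 0$, I would take the spectral projections $p_\vep := \chi_{[\vep, \|a\|^2]}(a^*a) \in A^{**}$ and $q_\vep := \chi_{[\vep, \|b\|^2]}(b^*b) \in B^{**}$ and work in the von Neumann corners $M_\vep := p_\vep A^{**} p_\vep$ and $N_\vep := q_\vep B^{**} q_\vep$, which have units $p_\vep, q_\vep$. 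Two decisive features of this cut-off are: $p_\vep a^*a p_\vep \geq \vep p_\vep$ is invertible in $M_\vep$ (and similarly $q_\vep b^*b q_\vep \geq \vep q_\vep$ in $N_\vep$); and $\|a - a p_\vep\|^2 \leq \vep$, which forces $a p_\vep \to a$ and $b q_\vep \to b$ in norm as $\vep \to 0$.

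I would then consider the contractive algebra homomorphism $\phi_\vep(y) := (p_\vep \ot q_\vep)\, y\, (p_\vep \ot q_\vep)$ from $A^{**} \ot^\lambda B^{**}$ to $M_\vep \ot^\lambda N_\vep$, and set $L_\vep := \overline{\phi_\vep(\widetilde J)}^{\,\lambda}$, a closed ideal in $M_\vep \ot^\lambda N_\vep$. Since $a \ot b \in J_{\min}$ and $A \ot^{\min} B$ is a $C^*$-algebra, $a^*a \ot b^*b = (a \ot b)^*(a \ot b) \in J_{\min}$; the analogous min-corner image $(p_\vep a^*a p_\vep) \ot (q_\vep b^*b q_\vep)$ lies in $(L_\vep)_{\min}$, is bounded below by $\vep^2(p_\vep \ot q_\vep)$, and is therefore invertible in the $C^*$-algebra $M_\vep \ot^{\min} N_\vep$. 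Thus $(L_\vep)_{\min}$ is an ideal of that $C^*$-algebra containing an invertible element, so $(L_\vep)_{\min} = M_\vep \ot^{\min} N_\vep$, in particular $p_\vep \ot q_\vep \in (L_\vep)_{\min}$; Lemma~\ref{lemma} then delivers $p_\vep \ot q_\vep \in L_\vep$.

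To finish, I pick $y_n \in \widetilde J$ with $\phi_\vep(y_n) \to p_\vep \ot q_\vep$ in $\lambda$; the products $(a \ot b)\,\phi_\vep(y_n) = (a p_\vep \ot b q_\vep)\, y_n\, (p_\vep \ot q_\vep)$ lie in $\widetilde J$ (since $\widetilde J$ is an ideal) and converge in $\lambda$ to $a p_\vep \ot b q_\vep$, so $a p_\vep \ot b q_\vep \in \widetilde J$. Letting $\vep \to 0$ and invoking the ($\lambda$-sub)cross property, $a p_\vep \ot b q_\vep \to a \ot b$ in $\lambda$, placing $a \ot b$ in $\widetilde J \cap (A \ot^\lambda B) = J$. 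For the $*$-semisimplicity claim, under (O1) the identity map $i : A \ot^\lambda B \to A \ot^{\min} B$ is a continuous $*$-homomorphism into a $*$-semisimple $C^*$-algebra whose kernel coincides with the $*$-radical of $A \ot^\lambda B$; applying the first part to this closed $*$-ideal, combined with standard Banach $*$-algebraic arguments, forces its triviality.

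The hardest step will be the identification $\widetilde J \cap (A \ot^\lambda B) = J$: $\widetilde J$ must simultaneously be an ideal of $A^{**} \ot^\lambda B^{**}$ (needed to keep $(a \ot b)\phi_\vep(y_n) \in \widetilde J$) and restrict exactly back to $J$ rather than to a strictly larger ideal. A careful approximation argument using bounded approximate units in $A$ and $B$, together with the isometric nature of Proposition~\ref{sc119}, should reconcile these two requirements. A secondary technicality is verifying that the $\min$- and $\lambda$-corner maps interact correctly with the closure operations taken in the two norms.
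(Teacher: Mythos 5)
Your overall architecture (pass to the biduals, cut down by spectral projections of $a^*a$ and $b^*b$, show the min-ideal of the corner contains an invertible element, invoke Lemma \ref{lemma} to pull $p_\vep\ot q_\vep$ back into the $\lambda$-ideal, then let $\vep\to 0$) matches the paper's proof in its main mechanical steps. But there is a genuine gap at the point you yourself flag as "the hardest step": the identification $\widetilde J\cap(A\ot^\lambda B)=J$. Your argument only ever produces $a\ot b\in\widetilde J$, the closed ideal of $A^{**}\ot^\lambda B^{**}$ generated by $J$, and to descend from there to $a\ot b\in J$ you need that this generated ideal meets $A\ot^\lambda B$ in exactly $J$. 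There is no reason for this to hold in general --- multiplying elements of $J$ by elements of $A^{**}\ot^\lambda B^{**}$ and taking closures can produce new elements of $A\ot^\lambda B$ --- and a bounded approximate identity for $A$ and $B$ does not repair this, because the offending multipliers live in the biduals, not in $A\ot^\lambda B$. This is precisely the obstruction that the classical arguments of Allen--Sinclair--Smith and Kumar--Rajpal were designed to circumvent, and the paper follows them: one does \emph{not} try to show the intersection property. Instead, one assumes $a\ot b\notin J$, chooses by Hahn--Banach a functional $\phi\in(A\ot^\lambda B)^*$ with $\phi(J)=0$ and $\phi(a\ot b)\neq 0$, uses the duality $(A\ot^\lambda B)^*=CB_\lambda(A\times B;\C)$ together with Proposition \ref{sc119} to extend $\phi$ to a separately normal $\lambda$-cb bilinear form on $A^{**}\times B^{**}$, and then verifies directly (using normality and the series representation of elements of $J$) that this extension annihilates the whole generated ideal $L$. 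The corner argument then yields $p_\vep a\ot q_\vep b\in L$, hence $\phi(p_\vep a\ot q_\vep b)=0$, and letting $\vep\to 0$ contradicts $\phi(a\ot b)\neq 0$. In other words, the functional replaces the set-theoretic identity you are missing; without it your proof does not close.

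Two secondary points. First, your map $\phi_\vep(y)=(p_\vep\ot q_\vep)y(p_\vep\ot q_\vep)$ is not an algebra homomorphism (compression by a projection is not multiplicative), so the claim that $L_\vep=\overline{\phi_\vep(\widetilde J)}$ is an ideal needs the separate observation that $pJp$ is an ideal of the corner because $J$ is two-sided; the paper instead takes $L_0=L\cap(p_\vep Mp_\vep\ot^\lambda q_\vep Nq_\vep)$ and uses Lemma \ref{sy6} (conditional expectations onto corners) to see the corner tensor product sits inside $M\ot^\lambda N$ as a closed subalgebra. Second, the paper reduces first to $a,b\geq 0$ and handles general $a,b$ by the polar-decomposition device of Allen--Sinclair--Smith; your reduction via $a^*a\ot b^*b$ is in the same spirit but you still need to return from a statement about $a^*a\ot b^*b$ to one about $a\ot b$ inside the $\lambda$-ideal, which again is done through the functional in the paper's approach.
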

\begin{proof}
Suppose that $a, b \geq 0$ and  $a \ot b \in J_{\min}$ but not in $J$.
So by Hahn Banach theorem there exists $\phi\in (A\ot^{\lambda} B)^*$ such that $\phi(J) = 0$ and $\phi(a \ot b) \neq 0$. Since,
$(A\ot^{\lambda} B)^* = CB_{\lambda}(A\times B, \C)$ so $\phi(x \ot y) = \tilde{\phi}(x, y)$ for some $\tilde{\phi} \in CB_{\lambda}(A\times B, \C )$
and for all $x \in A, y \in B$. By Proposition \ref{sc119}, we have $\tilde{\phi}^{**}: A^{**}\times B^{**}\to \C$
a $\lambda$-completely bounded operator satisfying $\|\tilde{\phi}^{**}\|_{ \lambda} = \|\tilde{\phi} \|_{\lambda} $ . 
Let $L$ be the closed ideal in $A^{**} \ot^{\lambda} B^{**}$ generated by $J$. Let $u=\ds\sum_{j=1}^{\infty} \alpha_j \ot_{\lambda_{k_j}}(x_1^j,x_2^j) \beta_j$  be a norm convergent sum in $A\ot^{\lambda} B$ representing a fixed but an arbitrary
element of $J$. Since $\phi$ annihilates $J$, so $\ds\sum_{j=1}^{\infty} \alpha_j \phi_{\lambda_{k_j}}(\ot_{\lambda_{k_j}}(x_1^j,x_2^j)) \beta_j= \ds\sum_{j=1}^{\infty}\ds\sum_{i_1,i_2} \alpha_j \lambda_{k_j}(a_{i_1},a_{i_2}) \ot \phi(x_{i_1}^j \ot x_{i_2}^j) \beta_j=0$. Let
$u, v \in A$ and $s, t \in B$ then  we have $\ds\sum_{j=1}^{\infty}\ds\sum_{i_1,i_2} \alpha_j \lambda_{k_j}(a_{i_1},a_{i_2}) \ot \phi(ux_{i_1}^j v\ot s x_{i_2}^j t) \beta_j=0$. Let $M = A*^{**}$ and $N = B^{**}$ be the von Neumann algebras generated by $A$
and $B$. For each $n\in \N$ and $u\in M$, define $w_n(u)= \ds\sum_{j=1}^{n}\ds\sum_{i_1,i_2} \alpha_j \lambda_{k_{j}}(a_{i_1},a_{i_2}) \ot \phi(ux_{i_1}^j v\ot s x_{i_2}^j t) \beta_j $. We will claim that $\{w_n\}$ is a Cauchy sequence.  To see this, let $m<n$,
$|w_n(u)-w_m(u)|=|\ds\sum_{j=m+1}^{n}\ds\sum_{i_1,i_2} \alpha_j \lambda_{k_{j}}(a_{i_1},a_{i_2}) \ot \phi(ux_{i_1}^j v\ot s x_{i_2}^j t) \beta_j \ | \leq \|u\| \|s\| \|t\|\|v\| \|\ds\sum_{j=m+1}^{n} \alpha_j \ot_{\lambda_{k_{j}}}(x_1^j,x_2^j) \beta_j\|_{\lambda}$, and so $\{w_n\}$ is a Cauchy sequence with limit $w\in M_{*}$ given by 
$w(u)=\ds\sum_{j=1}^{\infty}\ds\sum_{i_1,i_2} \alpha_j \lambda_{k_{j}}(a_{i_1},a_{i_2}) \ot \phi(ux_{i_1}^j v\ot s x_{i_2}^j  t)$.  Again as in \cite{allenideal}, we obtain $\phi$ annihilates $L$.

Now for $\epsilon >0$, let    $p_{\epsilon} \in M$ and $q_{\epsilon} \in N$ be the spectral projections of $a$ and $b$ respectively for the closed interval 
$[\epsilon, \infty )$. Since there is a conditional expectation  from $M$ onto $p_{\epsilon } M p_{\epsilon}$, so $p_{\epsilon } M p_{\epsilon}\ot^{\lambda} q_{\epsilon } N q_{\epsilon}$ is a closed subalgebra of $M\ot^{\lambda} N$ by Lemma \ref{sy6}. Let $L_0= L \cap (p_{\epsilon } M p_{\epsilon}\ot^{\lambda} q_{\epsilon } N q_{\epsilon}) $, a closed ideal in  $p_{\epsilon } M p_{\epsilon}\ot^{\lambda} q_{\epsilon } N q_{\epsilon} $, and so $(L_0)_{\min}$ is a closed ideal in $p_{\epsilon } M p_{\epsilon}\ot^{\min} q_{\epsilon } N q_{\epsilon} $. Now as in \cite[Theorem 4.4]{allenideal} and \cite[Theorem 6]{kumarideal}, we get $(L_0)_{\min}$ contains $p_{\epsilon } \ot q_{\epsilon} $ and so by the above \Cref{lemma}, $p_{\epsilon } \ot q_{\epsilon} \in L_0 $. Hence $L_0= p_{\epsilon } M p_{\epsilon}\ot^{\lambda} q_{\epsilon } N q_{\epsilon}$, which further implies that $p_\epsilon a\ot q_{\epsilon}b \in L$, and so 
$\phi(p_{\epsilon} a \ot  q_{\epsilon}b) = 0$. Letting $\epsilon \to 0$, we have $\phi( a \ot  b) = 0$, 
contrary to the choice of  $\phi$.

In the case when both $a$ and $b$ are arbitrary elements, then one may apply the similar technique  as given in \cite{allenideal} to
obtain the result.\end{proof}


\subsection*{Acknowledgements}
{\small The authors would like to thank Andreas Defant for providing a copy of \cite{wiesnerpolynomials}. The authors would also like to thank the referee for useful comments and suggestions.}

\bibliographystyle{plain}
\bibliography{Matrix_ordering_ref}
\end{document}